\documentclass[11pt]{article}
\usepackage{amsmath,amsthm,amssymb}
\usepackage[dvips]{graphicx}

\setlength{\topmargin}{-1cm}
\setlength{\headsep}{1cm}
\setlength{\textwidth}{15cm}
\setlength{\textheight}{23cm}
\setlength{\oddsidemargin}{1cm}
\setlength{\evensidemargin}{0cm}

\pagestyle{myheadings}\markright{Inverse polynomial images consisting of an interval and an arc}

\newcommand{\N}{\mathbb N}
\newcommand{\Z}{\mathbb Z}
\renewcommand{\P}{\mathbb P}
\newcommand{\R}{\mathbb R}
\newcommand{\C}{\mathbb C}
\newcommand{\h}{{\cal H}}
\newcommand{\T}{{\cal T}}
\newcommand{\U}{{\cal U}}
\newcommand{\CC}{{\cal C}}
\newcommand{\ii}{{\operatorname{i}}}
\newcommand{\dd}{{\operatorname{d}}}
\newcommand{\sn}{{\operatorname{sn}}}
\newcommand{\cn}{{\operatorname{cn}}}
\newcommand{\dn}{{\operatorname{dn}}}
\newcommand{\zn}{{\operatorname{zn}}}
\newcommand{\re}{{\operatorname{Re}}}
\newcommand{\im}{{\operatorname{Im}}}
\newcommand{\laK}{\lambda{K}}

\newcommand{\iK}{\ii{K}'}
\newcommand{\Tn}{\T_n^{-1}([-1,1])}
\newcommand{\tn}{\mathbf{T}_n}
\newcommand{\arc}{\widetilde{a_3a_4}}
\newcommand{\ov}{\overline}


\begin{document}


\title{Inverse Polynomial Images Consisting of an Interval and an Arc\footnote{published in: Computational Methods and Function Theory {\bf 9} (2009), 407--420.}}
\author{Klaus Schiefermayr\footnote{University of Applied Sciences Upper Austria, School of Engineering and Environmental Sciences, Stelzhamerstrasse\,23, 4600 Wels, Austria, \textsc{klaus.schiefermayr@fh-wels.at}}}
\date{}
\maketitle

\theoremstyle{plain}
\newtheorem{theorem}{Theorem}
\newtheorem{corollary}[theorem]{Corollary}
\newtheorem{lemma}[theorem]{Lemma}
\newtheorem{definition}[theorem]{Definition}
\newtheorem{notation}[theorem]{Notation}
\theoremstyle{definition}
\newtheorem*{remark}{Remark}
\newtheorem*{example}{Example}

\begin{abstract}
In this paper, some geometric properties of inverse polynomial images which consist of a real interval and an arc symmetric with respect to the real line are obtained. The proofs are based on properties of Jacobi's elliptic and theta functions.
\end{abstract}

\noindent\emph{Mathematics Subject Classification (2000):} 33E05, 30C10, 30C20

\noindent\emph{Keywords:} \emph{Inverse polynomial image}, \emph{Jacobi's elliptic functions}, \emph{Jacobi's theta functions}

\section{Introduction}


Let $\P_n$ be the set of all polynomials of degree $n$ and let $\T_n\in\P_n$. Let $\Tn$ be the inverse image of $[-1,1]$ under the polynomial mapping $\T_n$, i.e.,
\[
\Tn:=\bigl\{z\in\C:\T_n(z)\in[-1,1]\bigr\}.
\]
In general, $\Tn$ consists of $n$ Jordan arcs, on which $\T_n$ is strictly monotone increasing from $-1$ to $+1$, see \cite{Peh2003}. If there is a point $z_1\in\C$, for which $\T_n(z_1)\in\{-1,1\}$ and $\T_n'(z_1)=0$, then two Jordan arcs can be combined into one Jordan arc. This combination of arcs can be seen very clearly in the inverse image of the classical Chebyshev polynomial $T_n(z)=\cos(n\arccos(z))$. In this case, the inverse image $T_n^{-1}([-1,1])$ is just $[-1,1]$, i.e.\ \emph{one} Jordan arc, since there are $n-1$ points $z_j$ with the property $T_n(z_j)\in\{-1,1\}$ and $T_n'(z_j)=0$. Note that $T_n$ is, up to a linear transformation, the only polynomial mapping with this property. In \cite{PehSch}, Peherstorfer and the author have characterised the case in which $\Tn$ consists of \emph{two} Jordan arcs, see also \cite{Sch} for an algebraic solution of the problem. The paper in hand can be considered as a continuation of \cite{PehSch}, whereas here we focus on the case in which the two Jordan arcs consists of an interval and an arc symmetric with respect to the real line.


The paper is organised as follows: First, a general result on the number of extremal points on $\Tn$ is given. Then, we focus on the case in which $\Tn$ consists of a real interval and an arc symmetric with respect to the real line. We obtain several interesting geometric properties involving the connectivity of the two arcs, the number of extremal points of the corresponding polynomial on the two arcs and a density result concerning the endpoints of the arcs. Finally, some auxiliaries on Jacobis elliptic and theta functions, which are necessary for the proofs in Section\,2, are proved in Section\,3.

\section{Main results}


We call four pairwise distinct points $a_1,a_2,a_3,a_4\in\C$ a $\tn$-tuple if there exist polynomials $\T_n\in\P_n$ and $\U_{n-2}\in\P_{n-2}$ such that a polynomial equation (sometimes called Pell's equation) of the form
\begin{equation}\label{PolEqn}
\T_n^2(z)-\h(z)\,\U_{n-2}^2(z)=1
\end{equation}
holds, where
\[
\h(z):=(z-a_1)(z-a_2)(z-a_3)(z-a_4).
\]
Note that $\T_n$ and $\U_{n-2}$ are unique up to sign. It is proved in \cite[Thm.\,3]{PehSch} that the above polynomial equation \eqref{PolEqn} is equivalent to the fact that $\Tn$ consists of \emph{two Jordan arcs} with endpoints $a_1,a_2,a_3,a_4$. It will turn out that the point $z^*$ defined by the equation
\begin{equation}\label{dTn}
\frac{\dd}{\dd{z}}\Bigl\{\T_n(z)\Bigr\}=\pm{n}(z-z^*)\,\U_{n-2}(z)
\end{equation}
plays an important role in what follows. With the help of $z^*$, $\T_n$ can be given by the integral
\[
\T_n(z)=\pm\cosh\Bigl(n\int_{a_1}^{z}\frac{w-z^*}{\sqrt{\h(w)}}\,\dd{w}\Bigr).
\]
The first result gives the number of extremal points of $\T_n$ on $\Tn$. As usual, a point $z_0\in\Tn$ is called an extremal point of $\T_n$ on $\Tn$ if $|\T_n(z_0)|=\max_{z\in\Tn}|\T_n(z)|$, i.e., if $\T_n(z_0)\in\{-1,1\}$.


\begin{theorem}\label{Thm-EP}
Suppose that the polynomial equation \eqref{PolEqn} holds and let $z^*$ be defined by \eqref{dTn}. If $\T_n(z^*)\neq\pm1$ $\Bigl[\T_n(z^*)=\pm1\Bigr]$ then $\T_n$ has $n+2$ $\bigl[n+1\bigr]$ extremal points on $\Tn$.
\end{theorem}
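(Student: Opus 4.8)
\medskip\noindent\emph{Proof strategy.}\ The plan is to turn the assertion into a zero‑counting argument for the polynomial $\T_n^2-1$. Since $\max_{z\in\Tn}|\T_n|=1$, the extremal points of $\T_n$ on $\Tn$ are exactly the zeros of
\[
P(z):=\T_n^2(z)-1=\h(z)\,\U_{n-2}^2(z),
\]
a polynomial of degree $2n$ (comparing leading coefficients in \eqref{PolEqn} shows $\deg\U_{n-2}=n-2$ exactly). From this product form two facts are immediate: every $a_j$ is an extremal point, because $\h(a_j)=0$; and every zero of $\U_{n-2}$ is an extremal point, because $P$ vanishes there (and such a zero automatically lies in $\Tn$). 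The local key I would use is that at any extremal point $z_0$ one has $\T_n(z_0)=\pm1$, so one of the factors of $P=(\T_n-1)(\T_n+1)$ does not vanish at $z_0$; hence the multiplicity of $z_0$ as a zero of $P$ equals that of $\T_n\mp1$, which equals $1+\operatorname{ord}_{z_0}\T_n'$ because $(\T_n\mp1)'=\T_n'$.

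Summing the identity $\operatorname{ord}_{z_0}P=1+\operatorname{ord}_{z_0}\T_n'$ over all distinct extremal points $z_0$ and using $\deg P=2n$ gives
\[
2n=N+\sum_{z_0}\operatorname{ord}_{z_0}\T_n',
\]
where $N$ is the number of extremal points and the sum runs over them. Next I would invoke \eqref{dTn}: since $\T_n'=\pm n(z-z^*)\,\U_{n-2}$, the polynomial $\T_n'$ has exactly $n-1$ zeros counted with multiplicity, namely $z^*$ together with the zeros of $\U_{n-2}$. Because every zero of $\U_{n-2}$ is already known to be an extremal point, the only zero of $\T_n'$ that could possibly fail to be extremal is $z^*$.

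Finally I would split into the two cases. If $\T_n(z^*)=\pm1$, then $z^*$ is an extremal point, so all $n-1$ zeros of $\T_n'$ (with multiplicity) lie among the extremal points; hence $\sum_{z_0}\operatorname{ord}_{z_0}\T_n'=n-1$ and $N=2n-(n-1)=n+1$. If $\T_n(z^*)\neq\pm1$, then $z^*$ is not an extremal point; in particular $\U_{n-2}(z^*)\neq0$ (otherwise $z^*$ would be extremal) and $z^*\neq a_j$ (since $\T_n(a_j)=\pm1$), so $z^*$ is a \emph{simple} zero of $\T_n'$ and is the unique non‑extremal zero of $\T_n'$. Therefore $\sum_{z_0}\operatorname{ord}_{z_0}\T_n'=(n-1)-1=n-2$ and $N=2n-(n-2)=n+2$.

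I do not anticipate a real obstacle: once \eqref{PolEqn} is read multiplicatively the whole argument is elementary. The only points needing a little care are the exact‑degree statement $\deg\U_{n-2}=n-2$ and the bookkeeping around $z^*$ — precisely, checking in the case $\T_n(z^*)\neq\pm1$ that the factor $(z-z^*)$ contributes multiplicity exactly one to $\T_n'$ and is disjoint from the zeros of $\U_{n-2}$, since this is exactly what produces the drop from $n+2$ to $n+1$ when $z^*$ becomes extremal.
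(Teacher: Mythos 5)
Your argument is correct, and it takes a genuinely different route from the paper. The paper argues structurally: it invokes Lemma~1(ii) of \cite{PehSch} to pin down the exact multiplicity pattern of the zeros of $\T_n^2-1=\h\,\U_{n-2}^2$ (in Case~1, four simple zeros at the $a_j$ and $n-2$ double zeros at the simple zeros of $\U_{n-2}$; in Case~2, one zero of order $4$ or $3$ depending on whether $z^*$ is a double zero of $\U_{n-2}$ or a common zero of $\h$ and $\U_{n-2}$), and then reads off the count of distinct zeros. You instead run a self-contained double count: every zero of $P=\T_n^2-1$ is extremal, at each extremal point $\operatorname{ord}_{z_0}P=1+\operatorname{ord}_{z_0}\T_n'$, and comparing $\deg P=2n$ with $\deg\T_n'=n-1$ via \eqref{dTn} gives $N=2n-(n-1)=n+1$ or $N=2n-(n-2)=n+2$ according as $z^*$ is extremal or not. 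Your version buys independence from the external lemma — the only inputs are \eqref{PolEqn}, \eqref{dTn} and elementary multiplicity bookkeeping — and it makes transparent why the count drops by exactly one when $\T_n(z^*)=\pm1$. What it gives up is the finer structural information the paper's case analysis records (which extremal points are simple zeros of $\T_n^2-1$, i.e.\ genuine endpoints of arcs, versus higher-order ones where arcs join), information that the paper uses in spirit later when discussing how arcs combine. Both proofs need the exact degree $\deg\U_{n-2}=n-2$, which you correctly note follows from comparing leading terms in \eqref{PolEqn}; your handling of the subcase logic around $z^*$ (in particular that $\U_{n-2}(z^*)\neq0$ forces $z^*$ to be a simple, non-extremal zero of $\T_n'$) is sound.
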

\begin{proof}
\emph{Case\,1.}
$\T_n(z^*)\neq\pm1$. Then, by \cite[Lem.\,1(ii)]{PehSch}, $z^*$ is no zero of $\h\U_{n-2}$ and $\U_{n-2}$ has $n-2$ simple zeros which are no zeros of $\h$. Thus, by \eqref{dTn}, $\T_n^2-1$ has $n-2$ double zeros (zeros of $\U_{n-2}$) and 4 simple zeros (zeros of $\h$). This gives the assertion.\\
\emph{Case\,2.}
$\T_n(z^*)=\pm1$. Then, by \eqref{PolEqn}, $z^*$ is a zero of $\h\U_{n-2}$. By \cite[Lem.\,1(ii)]{PehSch}, there are two cases possible:\\
(i) $z^*$ is a double zero of $\U_{n-2}$ but not a zero of $\h$, and $\U_{n-2}$ has only simple zeros, apart from $z^*$.\\
(ii) $z^*$ is a zero of $\h$ and a simple zero of $\U_{n-2}$, and $\U_{n-2}$ has only simple zeros.\\
Thus, by \eqref{dTn}, in case (i), $\T_n^2-1$ has 4 simple zeros, $n-4$ double zeros and one zero of order 4, in case (ii), $\T_n^2-1$ has 3 simple zeros, $n-3$ double zeros and one zero of order 3. This completes the proof.
\end{proof}


In what follows, we focus on the case when $a_1,a_2\in\R$, $a_3,a_4\in\C$ with $a_3=\ov{a}_4$. By linear transformation and symmetry arguments, we only have to consider the case $a_1=-1$, $a_2=1$, $\re(a_3)>0$, $\im(a_3)>0$, $a_3=\ov{a}_4$. Since the characterisation of $\tn$-tuples involves a rigorous usage of Jacobi's elliptic and theta functions, let us briefly mention some notation.


Let $0<k<1$ denote the modulus of Jacobi's elliptic functions $\sn(u)$, $\cn(u)$, and $\dn(u)$, of Jacobi's theta functions $\Theta(u)$, $H(u)$, $H_1(u)$, and $\Theta_1(u)$, and, finally, of Jacobi's zeta function, $\zn(u)$. Let $k':=\sqrt{1-k^2}\in(0,1)$ be the complementary modulus, let $K\equiv{K}(k)$ and $E\equiv{E}(k)$ be the complete elliptic integral of the first and second kind, respectively, and let $K'\equiv K'(k):=K(k')$ and $E'\equiv{E}'(k):=E(k')$. For the definitions and many important properties
of these functions, see, e.g., \cite{ByrdFriedman,Lawden,MagnusOberhettingerSoni,WhittakerWatson}.


Let us consider the mapping
\begin{equation}\label{alphabeta}
\alpha+\ii\beta\equiv\alpha(k,\lambda)+\ii\beta(k,\lambda)
=\frac{\cn(2\laK)}{\dn^2(2\laK)}+\ii\,\frac{kk'\sn^2(2\laK)}{\dn^2(2\laK)}
\end{equation}
for $0<k<1$ and $0<\lambda<\frac{1}{2}$. This mapping is \emph{bijective} from $(0,1)\times(0,\frac{1}{2})$ onto $\bigl\{z\in\C:\re(z)>0,\im(z)>0\bigr\}$ (see Lemma\,\ref{Lemma-Bij}). By \cite[Theorem\,17]{PehSch}, for fixed $0<\lambda<\frac{1}{2}$, $\alpha(k,\lambda)$ is strictly monotone increasing in $k$ and $\alpha(k,\lambda)\to\cos(\lambda\pi)$, $\beta(k,\lambda)\to0$ as $k\to0$, and $\alpha(k,\lambda)\to+\infty$ as $k\to1$. For fixed $0<\lambda\leq\frac{1}{4}$, $\beta(k,\lambda)$ is also strictly monotone increasing in $k$. For fixed $0<\lambda<\frac{1}{4}$ and $\lambda=\frac{1}{4}$ and $\frac{1}{4}<\lambda<\frac{1}{2}$, $\beta(k,\lambda)\to0$ and $\beta(k,\lambda)\to1$ and $\beta(k,\lambda)\to+\infty$ as $k\to1$, respectively.

Moreover, the curves $\alpha(k,\lambda)+\ii\beta(k,\lambda)$ have the following nice property:


\begin{theorem}
Let $\alpha,\beta$ be defined in \eqref{alphabeta} then
\[
\alpha^2+\beta^2=1 \iff k=k'=\frac{1}{\sqrt{2}}.
\]
Further, $\alpha^2+\beta^2<1$ and $\alpha^2+\beta^2>1$ if and only if $k<\frac{1}{\sqrt{2}}$ and $k>\frac{1}{\sqrt{2}}$, respectively.
\end{theorem}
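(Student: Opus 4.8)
The plan is to work directly with the explicit formula \eqref{alphabeta} and reduce the claim $\alpha^2+\beta^2 \lessgtr 1$ to a statement about a single Jacobi elliptic function evaluated at $2\laK$, then analyse the sign of an auxiliary function in the variable $k$ for fixed $\lambda$. Writing $u := 2\laK$ for brevity, we have $\alpha = \cn(u)/\dn^2(u)$ and $\beta = kk'\sn^2(u)/\dn^2(u)$, so
\[
\alpha^2+\beta^2 = \frac{\cn^2(u) + k^2k'^2\sn^4(u)}{\dn^4(u)}.
\]
Using $\cn^2(u) = 1-\sn^2(u)$ and $\dn^2(u) = 1-k^2\sn^2(u)$, the inequality $\alpha^2+\beta^2 = 1$ becomes a polynomial identity in $s := \sn^2(u)$: after clearing denominators, $\cn^2(u) + k^2k'^2\sn^4(u) = \dn^4(u)$, i.e.
\[
(1-s) + k^2(1-k^2)s^2 = (1-k^2 s)^2 = 1 - 2k^2 s + k^4 s^2.
\]
This simplifies to $-s + k^2 s^2 - k^4 s^2 = -2k^2 s + k^4 s^2$, hence $(2k^2-1)\,s = 2k^4 s^2 - k^2 s^2 + \dots$; carrying out the algebra carefully one finds the whole expression factors so that $\alpha^2+\beta^2 - 1$ has the sign of $(1-2k^2)$ times a manifestly positive quantity (a product of $\sn^2$, $\dn^{-4}$, and a factor that stays positive for $0<k<1$, $0<u<K$). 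The first step, then, is to perform this factorisation cleanly and identify that positive cofactor.

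Once the identity $\alpha^2+\beta^2-1 = (1-2k^2)\cdot P(k,\lambda)$ is established with $P>0$ on the whole parameter range $(0,1)\times(0,\tfrac12)$, all three assertions of the theorem follow at once: $\alpha^2+\beta^2 = 1$ iff $1-2k^2 = 0$, i.e. $k = 1/\sqrt2$ (equivalently $k=k'$); $\alpha^2+\beta^2<1$ iff $1-2k^2>0$, i.e. $k<1/\sqrt2$; and $\alpha^2+\beta^2>1$ iff $k>1/\sqrt2$. The positivity of $P$ should be transparent from its explicit form, since $\sn(u)$, $\dn(u)$ are strictly positive for $0<u<K$ (and $0<2\laK<K$ because $0<\lambda<\tfrac12$), and the remaining polynomial factor in $s$ and $k$ should reduce to something like $1$ or $1-k^2s = \dn^2(u)>0$ after the simplification.

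The main obstacle I anticipate is purely the bookkeeping in the factorisation: the naive expansion above does not immediately display a clean factor of $(1-2k^2)$, so one must be careful to collect terms in $s$ correctly and verify that no spurious $\lambda$-dependence or sign change hides in the cofactor. A useful sanity check along the way is the limiting behaviour recorded just before the theorem: as $k\to0$ one has $\alpha\to\cos(\lambda\pi)$, $\beta\to0$, so $\alpha^2+\beta^2\to\cos^2(\lambda\pi)<1$, consistent with $(1-2k^2)\to1>0$; and as $k\to1$, $\alpha\to+\infty$, so $\alpha^2+\beta^2\to+\infty$, consistent with $(1-2k^2)\to-1<0$ — so the sign of the cofactor $P$ must indeed be negative near $k=1$? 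No: $\alpha^2+\beta^2-1\to+\infty$ while $1-2k^2\to-1$, which forces $P\to-\infty$ there, contradicting $P>0$. Hence the correct statement is that $\alpha^2+\beta^2-1$ equals $(1-2k^2)$ times a cofactor that is \emph{negative} on the range (or, equivalently, $(2k^2-1)$ times a positive cofactor); resolving this sign is exactly the care the factorisation demands, and once it is pinned down the theorem is immediate.
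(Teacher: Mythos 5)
Your proposal is essentially the paper's approach: substitute $s=\sn^2(2\laK)$, clear the denominator $\dn^4(2\laK)$, and reduce everything to a polynomial statement in $s$. The bookkeeping you leave open does close cleanly: collecting terms gives
\[
\cn^2(2\laK)+k^2{k'}^2\sn^4(2\laK)-\dn^4(2\laK)=(2k^2-1)\,s\,(1-k^2s),
\]
hence
\[
\alpha^2+\beta^2-1=\frac{(2k^2-1)\,\sn^2(2\laK)}{\dn^2(2\laK)},
\]
so the cofactor is $\sn^2(2\laK)/\dn^2(2\laK)>0$ on all of $(0,1)\times(0,\tfrac12)$; your corrected sign, $(2k^2-1)$ times a positive quantity, is the right one (and your guess that the residual factor is $1-k^2s=\dn^2$ is exactly what appears before the final cancellation). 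The only real difference from the paper is the endgame. The paper stops the algebra at the unfactored form $(1-2k^2)s+k^2(k^2-{k'}^2)s^2=0$, observes that this forces $1-2k^2=0$ (the two coefficients are negatives of each other, so the expression is $(1-2k^2)s(1-k^2s)$ with $s(1-k^2s)>0$), and then deduces the two strict inequalities by continuity from the limits $\alpha+\ii\beta\to\cos(\lambda\pi)$ as $k\to0$ and $\alpha\to+\infty$ as $k\to1$. Your version, once the displayed identity is written down, reads off the sign of $\alpha^2+\beta^2-1$ directly for every $k$ and needs no limiting argument; that is marginally more self-contained, at the cost of one extra line of algebra. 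Your sanity check via the limits is sound and is in fact the very argument the paper uses for the inequality half.
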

\begin{proof}
We have
\begin{align*}
&\alpha^2+\beta^2=1\\
&\iff \cn^2(2\laK)+k^2{k'}^2\sn^4(2\laK)=\dn^4(2\laK)\\
&\iff 1-\sn^2(2\laK)+k^2{k'}^2\sn^4(2\laK)=\bigl(1-k^2\sn^2(2\laK)\bigr)^2\\
&\iff (1-2k^2)\sn^2(2\laK)+k^2(k^2-{k'}^2)\sn^4(2\laK)=0\\
&\iff 1-2k^2=0 \quad\wedge\quad k^2-{k'}^2=0\\
&\iff k=\frac{1}{\sqrt{2}}
\end{align*}
The second assertion is clear since $\alpha+\ii\beta\to\cos(\lambda\pi)$ as $k\to0$ and $\alpha\to+\infty$ as $k\to1$.
\end{proof}


\noindent In Figure\,\ref{Fig_AlphaBeta}, the curves $\alpha(k,\lambda)+\ii\beta(k,\lambda)$ are plotted
\begin{itemize}
\item for fixed $\lambda\in\{\frac{1}{16},\frac{2}{16},\ldots,\frac{7}{16}\}$ and moving $k$ with solid lines (the curve for $\lambda=\frac{4}{16}=\frac{1}{4}$ is the fat solid line);
\item for fixed $k\in\{0.5,\frac{1}{\sqrt{2}},0.8\}$ and moving $\lambda$, $0<\lambda<\frac{1}{2}$, with dashed lines (the curve for $k=\frac{1}{\sqrt{2}}$ is the fat dashed line).
\end{itemize}


\begin{figure}[ht]
\begin{center}
\includegraphics[scale=1.4]{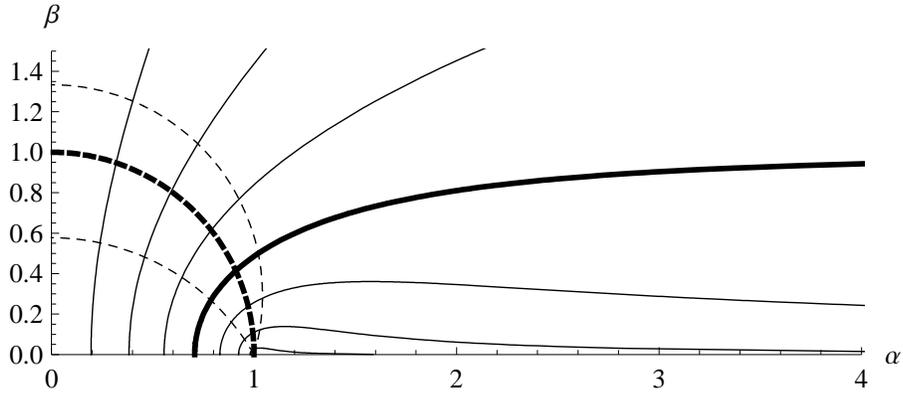}
\caption{\label{Fig_AlphaBeta} The curves $\alpha(k,\lambda)+\ii\beta(k,\lambda)$ for fixed $\lambda$ (solid lines) and fixed $k$ (dashing lines)}
\end{center}
\end{figure}


\noindent Let us summarize the known results on $\tn$-tuples which consists of an interval and an arc, see \cite[Section\,5.2]{PehSch}, \cite{Zolotarev2}, \cite[Section\,19]{Achieser1929} and \cite[pp.\,249--251]{Lebedev}.


\begin{theorem}\label{Thm_TnTuple}
Let $n\in\N$, and let $a_1=-1$, $a_2=1$, $a_3\in\C$, $\re(a_3)>0$, $\im(a_3)>0$, $a_4=\ov{a}_3$. Further, let $0<k<1$ and $0<\lambda<\frac{1}{2}$ be such that $a_3=\alpha(k,\lambda)+\ii\beta(k,\lambda)$, where $\alpha,\beta$ are defined in \eqref{alphabeta}.
\begin{enumerate}
\item The tuple $(a_1,a_2,a_3,a_4)=(-1,1,\alpha+\ii\beta,\alpha-\ii\beta)$ is a $\tn$-tuple if and only if $\lambda=\frac{m}{n}$, where $m\in\N$, i.e., if and only if $\lambda$ is rational with denominator $n$.
\item The associated polynomial $\T_n$ is given by
\begin{equation}\label{Tn}
\T_n(z(u))=\frac{1}{2}\Bigl(\Omega^n(u)+\frac{1}{\Omega^n(u)}\Bigr),
\end{equation}
where
\begin{equation}\label{Omega}
\Omega(u):=\frac{H(u-\laK)\,\Theta_1(u-\laK)}{H(u+\laK)\,\Theta_1(u+\laK)}
\end{equation}
and
\begin{equation}\label{z(u)}
z(u):=\frac{\cn(2u)\,\cn(2\laK)-1}{\cn(2u)-\cn(2\laK)}.
\end{equation}
\item The point $z^*$ defined by relation~\eqref{dTn} is given by
\begin{equation}\label{z*1}
z^*=1+\frac{1}{\dn(2\laK)}\bigl(\cn(2\laK)-1+2\,\sn(2\laK)\,\zn(\laK)\bigr)
\end{equation}
and $z^*>0$ holds.
\item The inverse image $\Tn$ is given by
\[
\Tn=\bigl\{z(u)\in\C: u\in\C, |\Omega(u)|=1\bigr\}
\]
and consists of the interval $[-1,1]$ and a Jordan arc symmetric with respect to the real line with the endpoints $a_3$ and $a_4$. We will denote this Jordan arc by $\arc$.
\item If $z^*\leq1$ then the arc $\arc$ crosses the interval $[-1,1]$ at $z^*$.
\item If $z^*>1$, then $\arc$ and $[-1,1]$ are non-intersecting.
\item For the special case $\lambda=\frac{1}{2}$, there is $a_3=\frac{\ii{k}}{k'}$, $z^*=0$,\\
$\T_n(z)=T_{\frac{n}{2}}(2{k'}^2(z^2-1)+1)$ and $\Tn=[-1,1]\cup[-\frac{\ii k}{k'},\frac{\ii{k}}{k'}]$.
\item If $\lambda$ changes to $1-\lambda$ then $\alpha$ changes to $-\alpha$, $\beta$ remains equal and $\Tn$ is reflected with respect to the imaginary axis.
\end{enumerate}
\end{theorem}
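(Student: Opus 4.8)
The plan is to deduce all eight items from the explicit elliptic construction behind \eqref{Tn}--\eqref{z(u)} together with the equivalence ``Pell equation $\iff$ two Jordan arcs'' of \cite[Thm.\,3]{PehSch}. The backbone is the quasi-periodicity of $\Omega$ in \eqref{Omega}: from the standard translation formulas for $H$ and $\Theta_1$ one gets $\Omega(u+2K)=\Omega(u)$ and $\Omega(u+2\iK)=\varepsilon\,\Omega(u)$ with a unimodular constant $\varepsilon=\varepsilon(k,\lambda)$, while $z(u)$ from \eqref{z(u)} realises a two-to-one elliptic map of the period torus onto $\C$ with branch points exactly $a_1,\dots,a_4$. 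Hence $\Omega^n$ descends to a single-valued rational function of $z$ precisely when $\varepsilon^n=1$, i.e.\ when $\lambda=m/n$ for some $m\in\N$; in that case one checks that $\tfrac12(\Omega^n+\Omega^{-n})$ is a polynomial in $z$ of degree $n$ satisfying \eqref{PolEqn}, which is (ii) and the ``if'' part of (i). For the ``only if'' part, a $\tn$-tuple forces (by \cite[Thm.\,3]{PehSch} and the uniqueness of $\T_n$ up to sign) that the polynomial just constructed exists, so $\Omega^n$ must be single-valued on $z$-space, i.e.\ $\varepsilon^n=1$. Item (iv) is then immediate: $\tfrac12(w+w^{-1})\in[-1,1]\iff|w|=1$, so $\Tn=\{z(u):|\Omega(u)|=1\}$; writing $|\Omega(u)|=1$ out through $\sn,\cn,\dn$ shows this locus consists of two curves in the period rectangle whose $z(u)$-images are $[-1,1]$ and a Jordan arc through $a_3=\ov{a}_4$ which is symmetric about $\R$ because $z(\ov u)=\ov{z(u)}$; the two-component structure and the endpoints $a_1,\dots,a_4$ come from \cite[Thm.\,3]{PehSch}.

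Item (iii) is a direct differentiation. From \eqref{Tn}, $\tfrac{\dd}{\dd u}\T_n(z(u))=\tfrac n2\,\frac{\Omega'(u)}{\Omega(u)}\bigl(\Omega^n(u)-\Omega^{-n}(u)\bigr)$, and, using that in the uniformising variable $u$ the differential $\dd z/\sqrt{\h(z)}$ is a constant multiple of $\dd u$, comparison with \eqref{dTn} gives $\Omega'(u)/\Omega(u)$ proportional to $z(u)-z^*$. Thus $z^*=z(u_0)$, where $u_0$ is the point of the period rectangle at which $\Omega'/\Omega$ vanishes; since $\Omega'/\Omega$ is a sum of logarithmic derivatives of $H$ and $\Theta_1$, it is expressible through Jacobi's zeta function $\zn$, and evaluating $z(u_0)$ with the addition and special-value identities for $\zn$ yields the closed form \eqref{z*1}. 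The inequality $z^*>0$ then follows either from the monotonicity of $\alpha,\beta$ recalled above together with a boundary value, or by an elementary estimate of \eqref{z*1} using $\zn(\laK)>0$ and $0<2\laK<K$.

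For (v) and (vi), note that $z^*$ as defined by \eqref{dTn} is automatically a point of $\Tn$, is real by the conjugate symmetry of the data, and is positive by (iii), so $z^*\in(0,\infty)$. By Theorem\,\ref{Thm-EP} the two Jordan arcs are non-disjoint exactly when $\T_n(z^*)=\pm1$; evaluating $\T_n$ at $z^*$ via \eqref{Tn} shows $\T_n(z^*)=\pm1$ holds if and only if $z^*\in[-1,1]$, i.e.\ (since $z^*>0$) if and only if $z^*\le1$. In that case $z^*$ can only be a common point of the two components if $\arc$ crosses $[-1,1]$ at $z^*$, which is (v); conversely $z^*>1$ gives $\T_n(z^*)\ne\pm1$, so by Theorem\,\ref{Thm-EP} there are $n+2$ extremal points and the two arcs are disjoint, which is (vi).

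Items (vii) and (viii) are special evaluations. For $\lambda=\tfrac12$ one has $2\laK=K$, hence $\sn(K)=1$, $\cn(K)=0$, $\dn(K)=k'$; substituting into \eqref{alphabeta} gives $a_3=\ii k/k'$, into \eqref{z*1} (with the value of $\zn(K/2)$) gives $z^*=0$, and into \eqref{z(u)} gives $z(u)=-1/\cn(2u)$, so that the quadratic transformation for $\Omega^2$ rewrites $\tfrac12(\Omega^n+\Omega^{-n})$ as $T_{n/2}\bigl(2{k'}^2(z^2-1)+1\bigr)$, and pulling $T_{n/2}^{-1}([-1,1])=[-1,1]$ back through this quadratic yields $\Tn=[-1,1]\cup[-\ii k/k',\ii k/k']$. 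For (viii), the reflection formulas $\sn(2K-v)=\sn v$, $\cn(2K-v)=-\cn v$, $\dn(2K-v)=\dn v$ applied at $v=2\laK$ give $\alpha\mapsto-\alpha$ and $\beta\mapsto\beta$ at once; and since $(-a_1,-a_2,-a_3,-a_4)$ is then a $\tn$-tuple with associated polynomial $z\mapsto(-1)^n\T_n(-z)$ by the uniqueness in \cite[Thm.\,3]{PehSch}, the set $\Tn$ is reflected across the imaginary axis. I expect the main obstacle to be item (iii) together with its use in (v): keeping the zeta-function bookkeeping under control so that the single zero of $\Omega'/\Omega$ is located correctly and collapses to \eqref{z*1}, and then pinning down exactly for which configurations $\T_n(z^*)=\pm1$, so that the dichotomy $z^*\le1$ versus $z^*>1$ cleanly separates the crossing and disjoint cases.
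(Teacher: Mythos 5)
A preliminary remark on the comparison: the paper does not prove Theorem~\ref{Thm_TnTuple} at all --- it is explicitly offered as a summary of known results, with the arguments delegated to \cite[Section~5.2]{PehSch}, \cite{Zolotarev2}, \cite[Section~19]{Achieser1929} and \cite[pp.~249--251]{Lebedev}. Your sketch reconstructs the classical route those sources take (descent of $\Omega^n$ to the $z$-plane, the identity $\T_n^2-1=\tfrac14(\Omega^n-\Omega^{-n})^2$ giving \eqref{PolEqn}, and the uniformisation $\dd z/\sqrt{\h(z)}=c\,\dd u$), so the strategy is the right one in outline. Two steps, however, do not go through as written.

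First, the rationality condition in (i). You test single-valuedness of $\Omega^n$ against the lattice generated by $2K$ and $2\iK$, but by Lemma~\ref{Lemma_z(u)}(ii) the fundamental periods of $z(u)$ are $2K$ and $K+\iK$. The translation formulas give $\Omega(u+K+\iK)=e^{2\pi\ii\lambda}\,\Omega(u)$, whereas $\Omega(u+2\iK)=e^{4\pi\ii\lambda}\,\Omega(u)$; so your condition $\varepsilon^n=1$ with $\varepsilon$ the $2\iK$-multiplier yields $\lambda=m/(2n)$, not $\lambda=m/n$. You need the translation by $K+\iK$, and you should also record $\Omega(-u)=\Omega(u)^{-1}$ so that $\tfrac12(\Omega^n+\Omega^{-n})$ is invariant under the deck transformation $u\mapsto-u$ and genuinely descends to a (polynomial) function of $z$. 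Second, the dichotomy (v)/(vi) rests entirely on the claim that ``evaluating $\T_n$ at $z^*$ via \eqref{Tn} shows $\T_n(z^*)=\pm1$ if and only if $z^*\in[-1,1]$'', but no such evaluation is carried out, and the statement is not automatic: for real $z\in(-1,1)$ one has $\h(z)=(z^2-1)|z-a_3|^2<0$, so \eqref{PolEqn} gives $\T_n(z)=\pm1\iff\U_{n-2}(z)=0$, and nothing in your argument forces $\U_{n-2}(z^*)=0$ whenever $z^*\le1$. Pinning this down (by locating the $u$-preimage of $z^*$ relative to the locus $|\Omega(u)|=1$) is exactly where the cited references, and the paper's own Theorem~\ref{Thm-EP-Arcs}, do the real work. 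A smaller but similar issue affects (iii): the zeta-function computation producing \eqref{z*1} is announced rather than performed, and $z^*>0$ is not evident from \eqref{z*1} as it stands (it is evident from \eqref{z*2}, which the paper only derives later, in Theorem~\ref{Thm-z*}).
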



\begin{remark}
In Figure\,\ref{Fig_Arcs}, for $n=8$ and $\lambda=\frac{2}{8}$ (i.e.\ $m=2$), the inverse images $\T_n^{-1}(\R)$ (dotted line) and $\Tn$ (solid line) are plotted for the cases $z^*<1$ (picture above), $z^*=1$ (picture in the middle) and $z^*>1$ (picture below), respectively, see Theorem\,\ref{Thm_TnTuple}\,(v) and (vi). Note that the extremal points of $\T_n$ on $\Tn$ are the endpoints of the two arcs $[-1,1]$ and $\arc$ and the points of intersection of the dotted and the solid lines, see Theorem\,\ref{Thm-EP-Arcs}.
\end{remark}


\begin{figure}[ht]
\begin{center}
\includegraphics[scale=1.0]{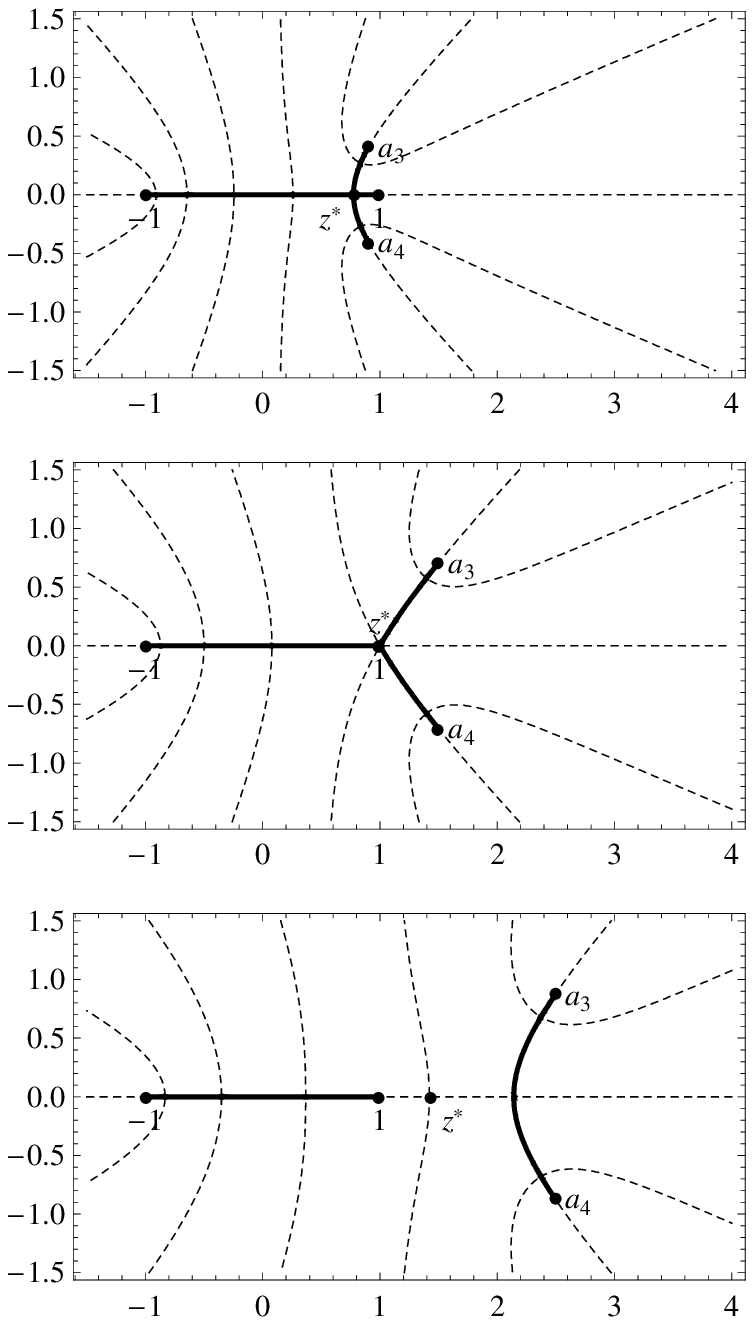}
\caption{\label{Fig_Arcs} Inverse images $\T_n^{-1}(\R)$ (dotted line) and $\Tn$ (solid line)}
\end{center}
\end{figure}


\begin{theorem}\label{Thm-EP-Arcs}
Let $0<k<1$, $0<\lambda<\frac{1}{2}$, $\lambda=\frac{m}{n}$, $m,n\in\N$, $0<m<\frac{n}{2}$, let $z(u)$ and $\Omega(u)$ be defined by \eqref{z(u)} and \eqref{Omega}, respectively, and let $\T_n(z(u))$ defined by \eqref{Tn}.
\begin{enumerate}
\item If $z^*\leq1$, i.e. the arcs $\arc$ and $[-1,1]$ are intersecting at the point $z^*$, then $\T_n(z)$ has at least $n-2m+1$ extremal points on $[-1,1]$.
\item If $z^*>1$, i.e. the arcs $\arc$ and $[-1,1]$ are non-intersecting, then $\T_n(z)$ has exactly $n-2m+1$ extremal points on $[-1,1]$ and exactly $2m+1$ extremal points on $\arc$.
\end{enumerate}
\end{theorem}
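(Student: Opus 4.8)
The plan is to read the two counts off the parametrisation \eqref{Tn}--\eqref{z(u)}. The first step is to locate $[-1,1]$ inside the $u$-plane. An analysis of \eqref{z(u)} for $u$ on the imaginary axis (using $\cn(2\ii v)=1/\cn(2v,k')$ and the monotonicity of the Möbius map $t\mapsto(t\cn(2\laK)-1)/(t-\cn(2\laK))$ on the relevant $t$-intervals) shows that $v\mapsto z(\ii v)$ is a strictly increasing homeomorphism of $[0,K']$ onto $[-1,1]$ with $z(0)=-1=a_1$, $z(\ii K')=1=a_2$, which on the longer interval $[0,2K')$ becomes a twofold covering of $[-1,1]$ branched over $a_1,a_2$. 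On this segment $|\Omega(\ii v)|\equiv1$, since $H$ odd, $\Theta_1$ even and $\overline{\Omega(u)}=\Omega(\overline u)$ give $\overline{\Omega(\ii v)}=1/\Omega(\ii v)$; hence one may write $\Omega(\ii v)=\exp(\ii\varphi(v))$ with $\varphi$ real, and by \eqref{Tn} then $\T_n(z(\ii v))=\cos(n\varphi(v))$. Thus $z(\ii v)$ is an extremal point of $\T_n$ on $[-1,1]$ exactly when $\varphi(v)\in\frac\pi n\Z$, and everything reduces to counting how often the continuous function $\varphi$ meets the lattice $\frac\pi n\Z$ on $[0,K']$.

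The endpoint values are pinned down: $\Omega(0)=[H\Theta_1](-\laK)/[H\Theta_1](\laK)=-1$, so $\varphi(0)\equiv\pi$, while the transformation formula for $\Omega(u+\ii K')$ (to be proved in Section\,3) gives $\Omega(\ii K')=\exp(2\pi\ii\lambda)$, so $\varphi(K')\equiv2\pi\lambda=2m\pi/n$; both lie in $\frac\pi n\Z$, in accordance with $a_1,a_2$ being extremal. For the behaviour in between I would use $\log\Omega(u)=\ii\pi+\int_{a_1}^{z(u)}(w-z^*)\,\dd w/\sqrt{\h(w)}$; differentiating along $u=\ii v$ yields $\varphi'(v)=\Omega'(\ii v)/\Omega(\ii v)$, and since $\h(w)<0$ for $w\in(-1,1)$, while $z'(\ii v)\neq0$ and $z(\ii v)$ is increasing for $v\in(0,K')$, one finds that $\varphi'(v)$ equals a factor of constant sign times $z(\ii v)-z^*$. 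Therefore $\varphi'$ keeps its sign precisely when $z^*\notin(-1,1)$, which by Theorem~\ref{Thm_TnTuple}(v),(vi) is exactly the dichotomy of the statement: if $z^*>1$ then $\varphi$ is strictly monotone on $[0,K']$; if $z^*\leq1$ then $\varphi'$ vanishes only at the single parameter with $z(\ii v)=z^*$.

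Granting for the moment the identity $|\varphi(K')-\varphi(0)|=(n-2m)\pi/n$, part\,(ii) on $[-1,1]$ follows at once: a strictly monotone $\varphi$ with endpoints in $\frac\pi n\Z$ and difference $(n-2m)\pi/n$ meets $\frac\pi n\Z$ in exactly $n-2m+1$ points. For part\,(i), $\varphi$ runs continuously between a value $\equiv\pi$ and one $\equiv2m\pi/n$, hence attains every point of $\frac\pi n\Z\cap[2m\pi/n,\pi]$; since $v\mapsto z(\ii v)$ is injective on $[0,K']$, these give $n-2m+1$ distinct extremal points on $[-1,1]$, the asserted lower bound. Finally the count on $\arc$ in part\,(ii): when $z^*>1$ one has $\T_n(z^*)\neq\pm1$ — otherwise, by \eqref{PolEqn}, $z^*$ would be a zero of $\h\U_{n-2}$, hence (not being a zero of $\h$) a zero of $\U_{n-2}$, so by \eqref{dTn} $\T_n'$ would vanish to order $\geq2$ at $z^*$, which forces at least three local branches of $\Tn$ at $z^*$, incompatible with $z^*$ lying in the interior of the single Jordan arc $\arc$. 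So by Theorem~\ref{Thm-EP} there are exactly $n+2$ extremal points on $\Tn$, and since $\arc\cap[-1,1]=\emptyset$ (Theorem~\ref{Thm_TnTuple}(vi)), exactly $(n+2)-(n-2m+1)=2m+1$ of them lie on $\arc$.

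The decisive point, and the one I expect to be hardest, is the identity $|\varphi(K')-\varphi(0)|=(n-2m)\pi/n$ — equivalently, that $\varphi$ performs no extra full turn. Two routes look viable. One is to treat $\Omega^n$ as a genuine elliptic function on $\C/(2K\Z+2\ii K'\Z)$ of degree $2n$, its zeros and poles being the $n$-fold points at $\pm\laK$ and $\pm\laK+K+\ii K'$ coming from the zeros of $H\Theta_1$, and to compute its winding along the loop given by the imaginary axis; this winding equals $2m-n$, and feeding it through the twofold branched covering $v\mapsto z(\ii v)$ yields the sharp count on $[-1,1]$ when $z^*>1$ and the inequality when $z^*\leq1$. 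The other is to evaluate the complete elliptic integral $\int_{-1}^{1}(z^*-w)\,\dd w/\bigl(\sqrt{1-w^2}\,|w-a_3|\bigr)$, which is $|\varphi(K')-\varphi(0)|$ for $z^*>1$: pulling it back through \eqref{z(u)} turns it into the integral of a logarithmic derivative of theta quotients, whose value the Section\,3 identities should identify as $(1-2\lambda)\pi$ for all $0<\lambda<\tfrac12$ (the value $\pi$ in the limit $\lambda\to0$, together with continuity in $\lambda$, already pins down the branch). Either way, the substance of the argument is the Jacobi-function machinery assembled in Section\,3.
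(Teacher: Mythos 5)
Your setup coincides with the paper's: pull $[-1,1]$ back to the segment $[0,\ii K']$ in the $u$-plane, use $|\Omega|=1$ to write $\Omega=e^{\ii\varphi}$ and $\T_n(z(u))=\cos(n\varphi(u))$, and read off the endpoint values $\varphi(0)\equiv\pi$, $\varphi(K')\equiv 2m\pi/n$ from the theta-function identities of Lemma\,\ref{Lemma_Omega}. Up to there, and for the lower bound in part (i), you are on the paper's track (the paper phrases the endpoint ambiguity as a $\min_{\ell\in\Z}$ over the unknown branch, which is slightly cleaner than your ``attains every point of $\tfrac{\pi}{n}\Z\cap[2m\pi/n,\pi]$'', but the conclusion is the same). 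The genuine gap is in part (ii): your exact count on $[-1,1]$ rests on the identity $|\varphi(K')-\varphi(0)|=(n-2m)\pi/n$, i.e.\ on excluding extra full turns of $\varphi$, and you explicitly leave this unproved, offering only two sketched routes (a winding-number computation for $\Omega^n$ as an elliptic function, or the evaluation of a complete elliptic integral). Since your count of $2m+1$ points on $\arc$ is obtained by subtracting the $[-1,1]$ count from $n+2$, the whole of part (ii) hangs on this missing identity; as written the proof is incomplete precisely at the step you yourself call decisive.

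The paper closes this without ever computing the winding: it runs the \emph{same} lower-bound argument a second time on the Jordan arc $\CC$ of Lemma\,\ref{Lemma_z(u)}\,(v) joining $\tfrac{K}{2}+\tfrac{\iK}{2}$ to $\tfrac{K}{2}-\tfrac{\iK}{2}$, where $\Omega=\exp(\pm\tfrac{\ii m\pi}{n})$, obtaining at least $2m+1$ extremal points on $\arc$. Since $(n-2m+1)+(2m+1)=n+2$ is the maximum permitted by Theorem\,\ref{Thm-EP} and the two arcs are disjoint, both lower bounds are forced to be equalities --- no monotonicity of $\varphi$, no total-variation identity, and no separate argument that $\T_n(z^*)\neq\pm1$ are needed (the last comes out as a by-product). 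Your own gap is in fact closable along your lines: if $\varphi$ is strictly monotone on $[0,K']$ (your $z^*\notin(-1,1)$ argument), the count on $[-1,1]$ equals $1+|n-2m-2\ell n|$ for the actual branch $\ell$, and any $\ell\neq0$ would already give at least $n+2m+1>n+2$ extremal points on $[-1,1]$ alone, contradicting Theorem\,\ref{Thm-EP}; but neither this observation nor either of your two proposed routes appears in the proposal, so the key step is missing.
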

\begin{proof}
Recall that, by Lemma\,\ref{Lemma_z(u)}, $z(0)=-1$, $z(\iK)=1$, $z:[0,\iK]\to[-1,1]$, $u\mapsto{z}(u)$, is bijective, and, by Lemma\,\ref{Lemma_Omega}, $\Omega(0)=-1=\exp(\ii\pi)$ and $\Omega(\iK)=\exp(\frac{2\ii{m}\pi}{n})$. Since $[-1,1]\subseteq\Tn$, for $u\in[0,\iK]$, there is $|\Omega(u)|=1$, thus $\Omega(u)=\exp(\ii\varphi(u))$, where $\varphi(u)\in\R$. Hence, by \eqref{Tn}, for $u\in[0,\iK]$, $\T_n(z(u))=\cos(n\varphi(u))$, and $\T_n(z(u))$ has an extremal point iff $\varphi(u)=\nu\pi$, $\nu\in\Z$. Now, if $u$ moves from $0$ to $\iK$ then, for some $\ell\in\Z$, $\varphi(u)$ moves continuously from $\pi$ to $\frac{2m\pi}{n}+2\ell\pi$ and $n\varphi(u)$ moves continuously from $n\pi$ to $2m\pi+2\ell{n}\pi$. Thus the number of extremal points of $\T_n(z)$ on $[-1,1]$ is at least
\[
1+\min_{\ell\in\Z}\bigl|n-2m-2\ell{n}\bigr|=1+n-2m.
\]
So far, we have proved assertion (i) and a part of assertion (ii).\\
Let $z^*>1$ and let $\CC$ be that Jordan arc in the $u$-plane of Lemma\,\ref{Lemma_z(u)}\,(v). Recall that, by Lemma\,\ref{Lemma_z(u)}\,(iii), $z(\frac{K}{2}-\frac{\iK}{2})=a_3$ and $z(\frac{K}{2}+\frac{\iK}{2})=a_4$ and, by Lemma\,\ref{Lemma_Omega}, $\Omega(\tfrac{K}{2}\pm\tfrac{\iK}{2})=\exp(\pm\tfrac{\ii{m}\pi}{n})$. For $u\in\CC$, $|\Omega(u)|=1$, thus, as above, $\Omega(u)=\exp(\ii\varphi(u))$, where $\varphi(u)\in\R$, and $\T_n(z(u))=\cos(n\varphi(u))$. Now, if $u$ moves continuously from $\frac{K}{2}+\frac{\iK}{2}$ to $\frac{K}{2}-\frac{\iK}{2}$ then, for some $\ell\in\Z$, $\varphi(u)$ moves continuously from $\frac{m\pi}{n}$ to $-\frac{m\pi}{n}+2\ell\pi$ and $n\varphi(u)$ moves continuously from $m\pi$ to $-m\pi+2\ell{n}\pi$. Thus, the number of extremal points of $\T_n(z)$ on the arc $\arc$ is at least
\[
1+\min_{\ell\in\Z}\bigl|m-(-m+2\ell{n})\bigr|=1+2m.
\]
Taking into consideration Theorem\,\ref{Thm-EP}, assertion (ii) is completely proved.
\end{proof}


The next theorem gives a density result.


\begin{theorem}
Let $\alpha,\beta>0$ and $n\in\N$ be given. Then there exist $\alpha^*,\beta^*>0$ such that $(a_1,a_2,a_3,a_4)=(-1,1,\alpha^*+\ii\beta^*,\alpha^*-\ii\beta^*)$ is a $\tn$-tuple and
\[
|(\alpha+\ii\beta)-(\alpha^*+\ii\beta^*)|\leq\frac{A}{n}
\]
and $A$ is a constant independent of $n$.
\end{theorem}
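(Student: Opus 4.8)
The plan is to hold the modulus $k$ fixed and to perturb only the parameter $\lambda$ into a rational number with denominator $n$, so that Theorem~\ref{Thm_TnTuple}\,(i) applies directly. First I would invoke Lemma~\ref{Lemma-Bij} to obtain the unique pair $(k,\lambda)\in(0,1)\times(0,\tfrac12)$ with $\alpha+\ii\beta=\alpha(k,\lambda)+\ii\beta(k,\lambda)$. Next, for $n\geq3$, I would choose an integer $m$ with $0<\tfrac mn<\tfrac12$ and $|\lambda-\tfrac mn|\leq\tfrac1n$: take $m$ to be the integer nearest $n\lambda$, and in the two boundary situations where this integer equals $0$ or is $\geq\tfrac n2$ replace it by $1$ or by the largest integer below $\tfrac n2$, respectively; the bound $|\lambda-\tfrac mn|\leq\tfrac1n$ is then verified by an elementary case distinction. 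Setting $\lambda^*:=\tfrac mn$ and $\alpha^*+\ii\beta^*:=\alpha(k,\lambda^*)+\ii\beta(k,\lambda^*)$, Lemma~\ref{Lemma-Bij} gives $\alpha^*,\beta^*>0$ because $\lambda^*\in(0,\tfrac12)$, and Theorem~\ref{Thm_TnTuple}\,(i) shows that $(-1,1,\alpha^*+\ii\beta^*,\alpha^*-\ii\beta^*)$ is a $\tn$-tuple.

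It then remains to bound $|(\alpha+\ii\beta)-(\alpha^*+\ii\beta^*)|$. I would study the function
\[
g(\mu):=\alpha(k,\mu)+\ii\beta(k,\mu)=\frac{\cn(2\muK)}{\dn^2(2\muK)}+\ii\,\frac{kk'\sn^2(2\muK)}{\dn^2(2\muK)},\qquad\mu\in[0,\tfrac12],
\]
on which $2\muK$ ranges over $[0,K]$. Since $\sn,\cn,\dn$ and their derivatives are bounded on $[0,K]$ and $\dn(2\muK)\in[k',1]$ stays away from $0$, the function $g$ is continuously differentiable on $[0,\tfrac12]$, so that $L:=\max_{\mu\in[0,1/2]}|g'(\mu)|$ is finite; crucially $L$ depends on $k$, hence on $\alpha$ and $\beta$, but not on $n$. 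Integrating $g'$ between $\lambda$ and $\lambda^*$ then yields
\[
|(\alpha+\ii\beta)-(\alpha^*+\ii\beta^*)|=|g(\lambda)-g(\lambda^*)|\leq L\,|\lambda-\lambda^*|\leq\frac{L}{n},
\]
so the claim follows with $A:=L$.

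The only step I expect to require genuine care is the finiteness of $L$, i.e.\ the boundedness (uniform in $\mu\in[0,\tfrac12]$, though not in $n$) of $\tfrac{\dd}{\dd\mu}[\cn(2\muK)/\dn^2(2\muK)]$ and $\tfrac{\dd}{\dd\mu}[kk'\sn^2(2\muK)/\dn^2(2\muK)]$; via the differentiation rules $\sn'=\cn\dn$, $\cn'=-\sn\dn$, $\dn'=-k^2\sn\cn$ this reduces to the fact that $\dn\geq k'>0$ throughout $[0,K]$ while $\sn,\cn,\dn$ themselves stay in $[-1,1]$ there, whence the derivatives are bounded in terms of $k$ alone. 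Everything else — the selection of $m$ and the attendant boundary bookkeeping — is elementary and should present no difficulty.
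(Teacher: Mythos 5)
Your proposal is correct and follows essentially the same route as the paper: fix $k$, round $\lambda$ to a nearby rational $\tfrac{m}{n}$, and bound the resulting displacement by a Lipschitz estimate in $\mu$ whose constant comes from $\dn(2\mu K)\geq k'>0$ (the paper packages exactly this derivative bound as Lemma\,\ref{Lemma-Derivatives}, yielding the explicit $A=\tfrac{2K}{{k'}^3}+\tfrac{4kK}{{k'}^2}$). If anything, your handling of the boundary cases for $m$ is slightly more careful than the paper's, which allows $m=0$.
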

\begin{proof}
Let $0<\lambda<\frac{1}{2}$ and $0<k<1$ be such that
\[
\alpha=\frac{\cn(2\laK)}{\dn^2(2\laK)},\qquad\beta=\frac{kk'\sn^2(2\laK)}{\dn^2(2\laK)},
\]
and let $m\in\{0,1,2,\ldots,\lfloor\frac{n}{2}\rfloor\}$ such that $|\lambda-\frac{m}{n}|\leq\frac{1}{n}$ and define
\[
\alpha^*:=\frac{\cn(2\frac{m}{n}K)}{\dn^2(2\frac{m}{n}K)},\qquad
\beta^*:=\frac{kk'\sn^2(2\frac{m}{n}K)}{\dn^2(2\frac{m}{n}K)}.
\]
Then
\begin{align*}
&|(\alpha+\ii\beta)-(\alpha^*+\ii\beta^*)|\\
&\leq\left|\frac{\cn(2\laK)}{\dn^2(2\laK)}-\frac{\cn(2\frac{m}{n}K)}{\dn^2(2\frac{m}{n}K)}\right|+
kk'\left|\frac{\sn^2(2\laK)}{\dn^2(2\laK)}-\frac{\sn^2(2\frac{m}{n}K)}{\dn^2(2\frac{m}{n}K)}\right|\\
&\leq\left|\lambda-\frac{m}{n}\right|\frac{2K}{{k'}^3}+kk'\left|\lambda-\frac{m}{n}\right|\frac{4K}{{k'}^3}
\qquad\text{by Lemma\,\ref{Lemma-Derivatives}}\\
&\leq\frac{1}{n}\Bigl(\frac{2K}{{k'}^3}+\frac{4kK}{{k'}^2}\Bigr)=:\frac{A}{n}
\end{align*}
\end{proof}


Next, we give some properties of the point $z^*$ as a function of the modulus $k$.


\begin{theorem}\label{Thm-z*}
The point $z^*$ given in \eqref{z*1} can also be expressed by the formula
\begin{equation}\label{z*2}
z^*=\cn(2\laK)+\frac{\sn(2\laK)\,\zn(2\laK)}{\dn(2\laK)}.
\end{equation}
For fixed $\lambda$, $0<\lambda<\frac{1}{2}$, the point $z^*$, as a function of the modulus $k$, $0<k<1$, is strictly monotone increasing. Furthermore,
\begin{equation}\label{z*3}
z^*\to\cos(\lambda\pi)\qquad\text{as $k\to0$}
\end{equation}
and
\begin{equation}\label{z*4}
z^*\sim\bigl(\frac{1}{2}-\lambda\bigr)\bigl(\frac{4}{k'}\bigr)^{2\lambda}\to\infty\qquad\text{as $k\to1$}.
\end{equation}
\end{theorem}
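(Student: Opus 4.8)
\medskip
\noindent\textit{Proof plan.} I would establish the four assertions in the order in which they appear.

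\emph{The formula \eqref{z*2}.} Equating the right-hand sides of \eqref{z*1} and \eqref{z*2} and clearing the common denominator $\dn(2\laK)$, the claim reduces, after rearrangement, to the single identity
\[
\sn(2\laK)\bigl(2\zn(\laK)-\zn(2\laK)\bigr)=\bigl(\cn(2\laK)-1\bigr)\bigl(\dn(2\laK)-1\bigr).
\]
The left-hand side I would rewrite via the addition theorem $\zn(u+v)=\zn(u)+\zn(v)-k^2\sn(u)\sn(v)\sn(u+v)$ taken at $u=v=\laK$, which yields $2\zn(\laK)-\zn(2\laK)=k^2\sn^2(\laK)\,\sn(2\laK)$; the right-hand side I would treat with the duplication formulas, in the form $\cn(2u)-1=-2\sn^2(u)\dn^2(u)/(1-k^2\sn^4u)$, $\dn(2u)-1=-2k^2\sn^2(u)\cn^2(u)/(1-k^2\sn^4u)$ and $\sn(2u)=2\sn(u)\cn(u)\dn(u)/(1-k^2\sn^4u)$. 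Both sides then collapse to $4k^2\sn^4(\laK)\cn^2(\laK)\dn^2(\laK)/(1-k^2\sn^4\laK)^2$, which proves the identity and hence \eqref{z*2}. This part is purely computational.

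\emph{The limit \eqref{z*3} and the monotonicity.} The limit \eqref{z*3} is immediate from \eqref{z*2}: as $k\to0$ one has $K\to\pi/2$, so $2\laK\to\lambda\pi$, while $\cn(\cdot,0)=\cos$, $\sn(\cdot,0)=\sin$, $\dn(\cdot,0)\equiv1$, $\zn(\cdot,0)\equiv0$, whence $z^*\to\cos(\lambda\pi)$. For the monotonicity I would put $w:=2\laK$ and view $z^*=g(w,k)$ with $g(w,k):=\cn(w,k)+\sn(w,k)\zn(w,k)/\dn(w,k)$, so that $\tfrac{d}{dk}z^*=g_w\cdot2\lambda\,\tfrac{dK}{dk}+g_k$. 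Using $\tfrac{d}{dw}\zn(w)=\dn^2(w)-E/K$ and $\dn^2+k^2\sn^2=1$, the partial $g_w$ collapses to the clean form $g_w=\dfrac{\cn(w)\,\zn(w)}{\dn^2(w)}-\dfrac{E}{K}\cdot\dfrac{\sn(w)}{\dn(w)}$, and $\tfrac{dK}{dk}=(E-{k'}^2K)/(k\,{k'}^2)>0$. It then remains to add $g_k$ — computed from the standard formulas for the $k$-derivatives of $\sn,\cn,\dn,\zn$ at fixed argument, which I would record among the auxiliaries of Section~3 — and to show that the total is strictly positive for all $0<k<1$ and $0<\lambda<\tfrac12$. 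Establishing this positivity is where I expect the first genuine effort; a possible alternative is to note that, the modular argument $\pi w/(2K)=\lambda\pi$ being constant, $z^*$ depends on the nome $q=e^{-\pi K'/K}$ alone (via the theta-quotient representations of $\cn,\sn,\dn$ and the Fourier series of $\zn$), and to prove it strictly increasing in $q$, which is equivalent since $q$ increases with $k$.

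\emph{The asymptotics \eqref{z*4}.} This is the main obstacle. As $k\to1$ we have $k'\to0$, $K\sim\ln(4/k')\to\infty$, $K'\to\pi/2$, $E\to1$, and $w=2\laK\to\infty$ with $e^{w}\sim(4/k')^{2\lambda}$; the crux is the behaviour of the Jacobi functions at this growing argument. I would extract it from the theta representations after the modular transformation exchanging the nome $q$ (which $\to1$) for the complementary nome $q_1=e^{-\pi K/K'}$ (which $\to0$), the argument $\pi w/(2K)=\lambda\pi$ remaining fixed; this gives $\sn(2\laK)\to1$, $\cn(2\laK)\sim\dn(2\laK)\sim2e^{-2\laK}\sim2(4/k')^{-2\lambda}\to0$ and $\zn(2\laK)\to1-2\lambda$ (the last limit also following from $\zn(2\laK)=E(\operatorname{am}(2\laK,k),k)-2\lambda E$ together with $\operatorname{am}(2\laK,k)\to\pi/2$ and $E\to1$). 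Substituting into \eqref{z*2}, the term $\cn(2\laK)=O\bigl((4/k')^{-2\lambda}\bigr)$ is negligible against the second one and
\[
z^*\sim\frac{\sn(2\laK)\,\zn(2\laK)}{\dn(2\laK)}\sim\frac{1-2\lambda}{2}\Bigl(\frac{4}{k'}\Bigr)^{2\lambda}=\Bigl(\tfrac12-\lambda\Bigr)\Bigl(\frac{4}{k'}\Bigr)^{2\lambda}\longrightarrow\infty,
\]
where $1-2\lambda>0$ is used. The care needed here lies in the error control — in particular, showing that the relative correction to $\dn(2\laK)$ and the ratio $\cn(2\laK)/(4/k')^{2\lambda}$ both tend to $0$ — and I would isolate these estimates as auxiliary lemmas in Section~3.
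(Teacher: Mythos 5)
Your derivation of \eqref{z*2} is correct and essentially equivalent to the paper's: the paper substitutes Lawden's half-argument formula $\zn(u)=\tfrac12\bigl(\zn(2u)+k^2\sn(2u)\tfrac{1-\cn(2u)}{1+\dn(2u)}\bigr)$ into \eqref{z*1}, while you reach the same identity via the addition theorem for $\zn$ and the duplication formulas; both are routine verifications. The limit \eqref{z*3} is fine, and your sketch of \eqref{z*4} via the degenerate forms $\sn(u,1)=\tanh u$, $\cn(u,1)=\dn(u,1)=\operatorname{sech}u$ and $\zn(2\laK)\to1-2\lambda$ produces the right answer (the paper simply cites Carlson--Todd for both limits), though the error control you defer is still owed.

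The genuine gap is the monotonicity, which is the analytic heart of the theorem and which you explicitly leave open (``establishing this positivity is where I expect the first genuine effort''). The decomposition $g_w\cdot2\lambda\,\dd K/\dd k+g_k$ is a correct setup but yields no sign by itself. The paper's proof succeeds because the total $k$-derivative of \eqref{z*2}, computed with the derivative formulas of \cite[Lem.~39]{PehSch} (which already incorporate the dependence $K=K(k)$), collapses to the factored form
\[
\frac{\dd}{\dd k}\bigl\{z^*\bigr\}=\frac{\zn(2\laK)\,\bigl[\sn(2\laK)\,\dn(2\laK)-\cn(2\laK)\,\zn(2\laK)\bigr]}{k{k'}^2\dn^2(2\laK)},
\]
so that, since $\zn(u)>0$ on $(0,K)$, everything reduces to the single inequality $\zn(u)<\sn(u)\dn(u)/\cn(u)$. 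That inequality is not elementary: the paper derives it from the sharper bound $\zn(u)\le\frac{k^2}{1+{k'}^2}\cdot\frac{\sn(u)\cn(u)}{\dn(u)}$ of Lemma~\ref{Lemma_IneqZn}, proved by a convexity argument (the difference vanishes at $u=0$ and $u=K$ and has one-signed second derivative), combined with $\dn^2(u)-k^2\cn^2(u)={k'}^2>0$. Without identifying this collapse of the derivative and supplying a bound on $\zn$ of this kind, the claim that $z^*$ is strictly increasing in $k$ remains unproved; your alternative suggestion (monotonicity in the nome $q$) is likewise only a restatement of what must be shown.
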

\begin{proof}
By the formulae (3.6.2) and (2.4.4) of \cite{Lawden}, we get
\[
\zn(u)=\frac{1}{2}\Bigl(\zn(2u)+k^2\sn(2u)\,\frac{1-\cn(2u)}{1+\dn(2u)}\Bigr)
\]
Applying this formula for $\zn(\laK)$ in \eqref{z*1} gives after some simplification \eqref{z*2}.\\
Using the formulae for the derivatives of $\sn(\laK)$, $\cn(\laK)$, $\dn(\laK)$, and $\zn(\laK)$ with respect to $k$ (for fixed $\lambda$), see \cite[Lem.\,39]{PehSch}, we get from \eqref{z*2}
\[
\frac{\dd}{\dd{k}}\bigl\{z^*\bigr\}=\frac{\zn(2\laK)\,\bigl[\sn(2\laK)\,\dn(2\laK)-\cn(2\laK)\,\zn(2\laK)\bigr]}{k{k'}^2\dn^2(2\laK)}.
\]
Thus, it remains to prove that
\begin{equation}\label{eqn1}
\sn(2\laK)\,\dn(2\laK)-\cn(2\laK)\,\zn(2\laK)>0.
\end{equation}
By Lemma\,\ref{Lemma_IneqZn},
\[
\zn(2\laK)\leq\frac{k^2\sn(2\laK)\,\cn(2\laK)}{\dn(2\laK)}<\frac{\sn(2\laK)\,\dn(2\laK)}{\cn(2\laK)},
\]
where the last inequality is true since
\[
0<\dn^2(2\laK)-k^2\cn^2(2\laK)={k'}^2.
\]
Hence, inequality \eqref{eqn1} is true and the monotonicity of $z^*$ is proved.\\
The limits of $z^*$ as $k\to0$ and $k\to1$ follow immediately from \cite{CarlsonTodd}.
\end{proof}


\begin{remark}
\begin{enumerate}
\item A similar formula as \eqref{z*2} can be found in \cite[eq.\,(54)]{Achieser1929}.
\item Formulae \eqref{z*3} and \eqref{z*4} can also be found in \cite[eq.\,(2.82),~eq.\,(2.83)]{Lebedev} (there is a misprint in formula (2.83)).
\item By Theorem\,\ref{Thm-z*}, for each $\lambda\in(0,\frac{1}{2})$, there exists a uniquely determined modulus $k^*\equiv{k}^*(\lambda)$, for which $z^*=1$, Thus, for $k\leq{k}^*$, the arc $\arc$ is intersecting the interval $[-1,1]$ (at $z^*$), and, for $k>k^*$, $\arc$ and $[-1,1]$ are non-intersecting. Compare Figure\,\ref{Fig_Arcs}, which shows the inverse images $\T_n^{-1}(\R)$ (dotted line) and $\Tn$ (solid line) for $n=8$, $\lambda=2/8$ and $k=0.7<k^*$ (picture above), $k^*=0.942809\ldots$ (picture in the middle) and $k=0.99>k^*$ (picture below), respectively.
\item Figure\,\ref{Fig_Bound} shows that parametric curve $\alpha(\lambda,k^*(\lambda))+\ii\beta(\lambda,k^*(\lambda))$, on which $z^*=1$, i.e. for all points $a_3=\alpha+\ii\beta$, $\alpha,\beta>0$, lying on the left hand side [right hand side] of this curve, the corresponding arc $\arc$ is intersecting [non-intersecting] the interval $[-1,1]$. The following theorem gives a simple sufficient condition such that the two arcs $\arc$ and $[-1,1]$ are intersecting each other.
\end{enumerate}
\end{remark}


\begin{figure}[ht]
\begin{center}
\includegraphics[scale=0.7]{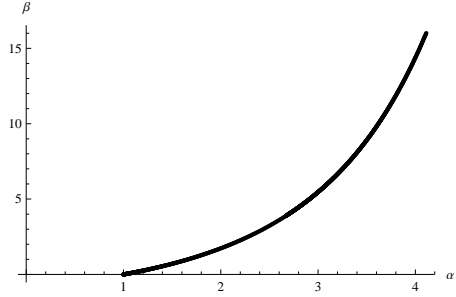}
\caption{\label{Fig_Bound} Parametric curve $\alpha(\lambda,k^*)+\ii\beta(\lambda,k^*)$, on which $z^*=1$}
\end{center}
\end{figure}


\begin{theorem}
If $\alpha\leq1$ the two arcs $\arc$ and $[-1,1]$ are intersecting each other.
\end{theorem}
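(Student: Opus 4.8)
The plan is to show that the condition $\alpha \le 1$ forces $z^* \le 1$, since by Theorem~\ref{Thm_TnTuple}(v) this is exactly what guarantees that $\arc$ crosses $[-1,1]$ (at the point $z^*$). So the whole task reduces to the implication $\alpha \le 1 \implies z^* \le 1$. Using the formula \eqref{z*2} from Theorem~\ref{Thm-z*}, namely $z^* = \cn(2\laK) + \sn(2\laK)\zn(2\laK)/\dn(2\laK)$, together with $\alpha = \cn(2\laK)/\dn^2(2\laK)$ from \eqref{alphabeta}, I would first translate the hypothesis $\alpha \le 1$ into $\cn(2\laK) \le \dn^2(2\laK)$. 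Since $0 < 2\laK < K$ (as $0<\lambda<\tfrac12$), all of $\sn(2\laK)$, $\cn(2\laK)$, $\dn(2\laK)$ are positive, and $\cn < \dn < 1$ always holds; the hypothesis is the sharper statement $\cn \le \dn^2$.

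Next I would bound the term $\sn(2\laK)\zn(2\laK)/\dn(2\laK)$. Here the key input is Lemma~\ref{Lemma_IneqZn} (invoked already in the proof of Theorem~\ref{Thm-z*}), which gives $\zn(2\laK) \le k^2\sn(2\laK)\cn(2\laK)/\dn(2\laK)$. Substituting,
\[
z^* \le \cn(2\laK) + \frac{k^2 \sn^2(2\laK)\,\cn(2\laK)}{\dn^2(2\laK)}
= \cn(2\laK)\,\frac{\dn^2(2\laK) + k^2\sn^2(2\laK)}{\dn^2(2\laK)}.
\]
Now $\dn^2(2\laK) + k^2\sn^2(2\laK) = 1 - k^2\sn^2(2\laK) + k^2\sn^2(2\laK) = 1$, so the bound collapses to $z^* \le \cn(2\laK)/\dn^2(2\laK) = \alpha$. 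Combined with the hypothesis $\alpha \le 1$ this yields $z^* \le 1$, and then Theorem~\ref{Thm_TnTuple}(v) finishes the argument.

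The main thing to watch is the applicability and exact form of Lemma~\ref{Lemma_IneqZn}: the inequality $\zn(u) \le k^2\sn(u)\cn(u)/\dn(u)$ is stated (in the proof of Theorem~\ref{Thm-z*}) for the argument $2\laK$, and one needs $\zn(2\laK) \ge 0$ on this range so that no sign reversal occurs when the chain of inequalities is assembled — this holds since $\zn(u) > 0$ for $0 < u < K$ and $0 < 2\laK < K$. Beyond that the computation is the routine identity $\dn^2 + k^2\sn^2 = 1$, so I do not anticipate a genuine obstacle; the only subtlety is making sure the estimate on the second summand is chased through correctly so that the factor $\cn(2\laK)/\dn^2(2\laK)$ is recognized as precisely $\alpha$.
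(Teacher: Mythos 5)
Your proposal is correct and follows essentially the same route as the paper: reduce to showing $z^*\le 1$ via Theorem\,\ref{Thm_TnTuple}(v), bound $\zn(2\laK)$ from above by Lemma\,\ref{Lemma_IneqZn}, and use $k^2\sn^2+\dn^2=1$ to collapse the bound to $z^*\le\alpha$. The only (harmless) extra remark is your concern about the sign of $\zn(2\laK)$, which is not actually needed since one only multiplies the inequality by the positive factor $\sn(2\laK)/\dn(2\laK)$.
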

\begin{proof}
Let $z^*$ be given by \eqref{z*2}. By Theorem\,\ref{Thm_TnTuple}\,(iv), if $z^*\leq1$ then $\arc$ is intersecting the interval $[-1,1]$. By inequality \eqref{IneqZn} of Lemma\,\ref{Lemma_IneqZn}, a sufficient condition for $z^*\leq1$ is
\begin{equation}\label{SuffCond}
\cn(2\laK)+\frac{k^2\sn^2(2\laK)\,\cn(2\laK)}{\dn^2(2\laK)}\leq1.
\end{equation}
By \eqref{alphabeta},
\begin{align*}
\eqref{SuffCond}&\iff\cn(2\laK)\Bigl[1+\frac{1-\dn^2(2\laK)}{\dn^2(2\laK)}\Bigr]\leq1\\
&\iff\frac{\cn(2\laK)}{\dn^2(2\laK)}\leq1\\
&\iff \alpha\leq1.
\end{align*}
This completes the proof.
\end{proof}

\section{Auxiliary results}


\begin{lemma}\label{Lemma-Bij}
The mapping
\begin{equation}
\begin{array}{rccc}
f:&(0,1)\times(0,\frac{1}{2})&\to&\R^+\times\R^+\\
&(k,\lambda)&\mapsto&\Bigl(\dfrac{\cn(2\laK)}{\dn^2(2\laK)},\dfrac{kk'\sn^2(2\laK)}{\dn^2(2\laK)}\Bigr)
\end{array}
\end{equation}
is bijective.
\end{lemma}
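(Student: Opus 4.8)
The plan is to prove injectivity and surjectivity of $f$ separately, using the structural information already assembled in the paper. For surjectivity, I would start from the fact (quoted from \cite[Theorem\,17]{PehSch} in the paragraph after \eqref{alphabeta}) that for each fixed $\lambda\in(0,\frac12)$ the function $k\mapsto\alpha(k,\lambda)$ is strictly increasing on $(0,1)$ with $\alpha(k,\lambda)\to\cos(\lambda\pi)$ as $k\to0$ and $\alpha(k,\lambda)\to+\infty$ as $k\to1$. Hence the $\lambda$-curve $k\mapsto\alpha(k,\lambda)+\ii\beta(k,\lambda)$ starts at the boundary point $\cos(\lambda\pi)+\ii0$ on the positive real axis and escapes to infinity; letting $\lambda$ range over $(0,\frac12)$ sweeps these starting points over the whole open segment $(0,1)$ of the positive real axis. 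The remaining task for surjectivity is to show these curves together cover all of the open first quadrant. For this I would use a degree/continuity argument: the map $f$ extends continuously to the closed parameter rectangle (with the appropriate limits read off from the same cited monotonicity statement and from Theorem\,\ref{Thm-z*}-type asymptotics), the image of the boundary of $(0,1)\times(0,\frac12)$ is a curve that, together with the point at infinity, bounds the first quadrant, and $f$ is a local homeomorphism in the interior; a standard argument (e.g. invariance of domain plus connectedness of the complement of the boundary image) then forces $f$ to be onto $\R^+\times\R^+$.

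For injectivity, suppose $f(k_1,\lambda_1)=f(k_2,\lambda_2)$. The key observation is that the two $\lambda$-curves $\{k\mapsto\alpha(k,\lambda_i)+\ii\beta(k,\lambda_i)\}$ must then cross. I would show any two distinct $\lambda$-curves are disjoint; combined with strict monotonicity of $\alpha(\cdot,\lambda)$ in $k$ (so each curve meets itself only once) this yields $k_1=k_2$ and $\lambda_1=\lambda_2$. To see that distinct $\lambda$-curves do not meet, note that along a $\lambda$-curve the quantity $z^{*}=\alpha/(\text{something})$, or more precisely the reparametrisation in terms of $k'$, can be disentangled: from \eqref{alphabeta} one has $\alpha^2+\beta^2$ and the combination appearing in the proof of the preceding theorem expressible through $\sn^2(2\laK)$ and $\cn(2\laK)$, and eliminating $\sn(2\laK)$ gives an explicit relation between $\alpha$, $\beta$, and $k$ alone. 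Concretely, writing $s=\sn^2(2\laK)$, the identities $\cn^2=1-s$, $\dn^2=1-k^2s$ turn $\alpha=(1-s)^{1/2}/(1-k^2s)$ hmm—more usefully $\alpha\,\dn^2=\cn$ and $\beta\,\dn^2 = kk' s$, so $\beta/(kk')\cdot$(something) relates to $1-\cn^2$; squaring and substituting produces a single algebraic equation $F(\alpha,\beta,k)=0$ that is strictly monotone in $k$ for each fixed $(\alpha,\beta)$ in the first quadrant. That monotonicity is exactly what pins down $k$ uniquely from $(\alpha,\beta)$, hence $k_1=k_2$, and then strict monotonicity of $\alpha(\cdot,\lambda)$ in $\lambda$ at fixed $k$ (or of $\beta$, whichever is available from the cited results) gives $\lambda_1=\lambda_2$.

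The main obstacle I anticipate is establishing that the explicit algebraic relation $F(\alpha,\beta,k)=0$ is strictly monotone in $k$ on the relevant range, or equivalently ruling out that two different $\lambda$-curves intersect; the monotonicity quoted from \cite{PehSch} is only along the coordinate directions of the rectangle, and one must combine several such facts (or do a direct computation with the elliptic-function identities in Section\,3, such as Lemma\,\ref{Lemma-Derivatives} and Lemma\,\ref{Lemma_IneqZn}) to control the map globally. A clean alternative that sidesteps the global computation is the topological route: show $f$ is proper (preimages of compacta are compact, using the boundary asymptotics) and a local diffeomorphism (nonvanishing Jacobian, which again reduces to the one-variable monotonicities already in hand), whence $f$ is a covering map onto the simply connected region $\R^+\times\R^+$, therefore a homeomorphism. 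I would present the topological argument as the main line and relegate the Jacobian nonvanishing to the elliptic-function identities of Section\,3.
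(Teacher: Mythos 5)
Your proposal is a strategy outline rather than a proof, and the two steps you defer are exactly the ones that have to be done. The paper's actual argument is a short explicit inversion: using $k^2\sn^2(u)=1-\dn^2(u)$ and ${k'}^2\sn^2(u)=\dn^2(u)-\cn^2(u)$, the system $\alpha=\cn(2\laK)/\dn^2(2\laK)$, $\beta=kk'\sn^2(2\laK)/\dn^2(2\laK)$ is equivalent to a quadratic in $d:=\dn^2(2\laK)$ whose coefficients involve only $\alpha$ and $\beta$, namely $\alpha^2d^2-(1+\alpha^2+\beta^2)d+1=0$; this determines $d$ explicitly from $(\alpha,\beta)$ alone, then $k^2=(1-d)/(1-\alpha^2d^2)$ determines $k$, and finally $\lambda$ is recovered uniquely from $d$ and $k$. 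So the ``explicit algebraic relation $F(\alpha,\beta,k)=0$'' that you flag as the main anticipated obstacle in fact solves for $k$ in closed form once you carry out the elimination you describe: no monotonicity in $k$, no disjointness of $\lambda$-curves, and no topology are needed. (One should still check that for every $(\alpha,\beta)\in\R^+\times\R^+$ the resulting $d$ and $k^2$ land in the admissible ranges, which is the surjectivity half; the paper is terse about this, but it is routine.)

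The genuine gap in your alternative topological route is the claim that nonvanishing of the Jacobian ``reduces to the one-variable monotonicities already in hand.'' It does not: the results cited from \cite{PehSch} give the sign of $\partial\alpha/\partial k$ for all $\lambda$ but the sign of $\partial\beta/\partial k$ only for $\lambda\leq\frac14$, and in any case the Jacobian determinant is the signed combination $\alpha_k\beta_\lambda-\alpha_\lambda\beta_k$, whose sign is not determined by the signs of the four partial derivatives taken separately. Properness likewise needs an argument: the boundary behaviour is not uniform (as $k\to1$ one has $\beta\to0$, $\beta\to1$, or $\beta\to+\infty$ depending on whether $\lambda$ is below, at, or above $\frac14$), so ruling out that an interior point of $\R^+\times\R^+$ is a limit of images of parameter points escaping to the boundary of the rectangle requires care. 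Both difficulties evaporate if you simply finish the elimination you already started.
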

\begin{proof}
The system of equations
\begin{align*}
\alpha&=\frac{\cn(2\laK)}{\dn^2(2\laK)}\\
\beta&=\frac{kk'\sn^2(2\laK)}{\dn^2(2\laK)}
\end{align*}
is equivalent to (using the formulas $k^2\sn^2(u)=1-\dn^2(u)$ and ${k'}^2\sn^2(u)=\dn^2(u)-\cn^2(u)$)
\begin{align*}
\alpha^2&=\frac{\cn^2(2\laK)}{\dn^4(2\laK)}\\
\beta^2&=\frac{(1-\dn^2(2\laK))(1-\alpha^2\dn^2(2\laK))}{\dn^2(2\laK)}
\end{align*}
which is again equivalent to (using the formula $k^2=(1-\dn^2(u))/(1-\cn^2(u))$)
\begin{align}
\dn^2(2\laK)&=\frac{1}{2\alpha^2}\bigl(1+\alpha^2+\beta^2-\sqrt{(1+\alpha^2+\beta^2)^2-4\alpha^2}\bigr)\label{eqn-dn}\\
k^2&=\frac{1-\dn^2(2\laK)}{1-\alpha^2\dn^4(2\laK)}\label{eqn-k2}
\end{align}
Thus, for given $\alpha,\beta>0$, in a unique way, one can determine $\dn^2(2\laK)$ from \eqref{eqn-dn}, then $k$ from \eqref{eqn-k2} and finally $0<\lambda<\frac{1}{2}$ with the help of \eqref{eqn-dn}.
\end{proof}


\begin{lemma}\label{Lemma_Ineq}
Let $f\in{C}^2([a,b])$, $a<b$. If $f(a)=f(b)=0$ and $f''(u)\geq0$ $\left[f''(u)\leq0\right]$ for $u\in[a,b]$ then $f(u)\leq0$ $\left[f(u)\geq0\right]$ for $u\in[a,b]$.
\end{lemma}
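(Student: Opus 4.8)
The plan is to prove Lemma~\ref{Lemma_Ineq} by a direct application of the Mean Value Theorem together with the sign condition on $f''$, so the main work is to set up a clean comparison. First I would treat the case $f''\geq0$ on $[a,b]$ (the other case follows by applying the result to $-f$). The key observation is that $f''\geq0$ makes $f'$ monotone increasing on $[a,b]$, so $f'$ changes sign at most once. Combined with $f(a)=f(b)=0$, Rolle's theorem supplies a point $c\in(a,b)$ with $f'(c)=0$; monotonicity of $f'$ then forces $f'(u)\leq0$ on $[a,c]$ and $f'(u)\geq0$ on $[c,b]$.

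The next step is to convert these sign statements on $f'$ into the desired sign statement on $f$. For $u\in[a,c]$ I would write
\[
f(u)=f(a)+\int_a^u f'(t)\,\dd t=\int_a^u f'(t)\,\dd t\leq0,
\]
since the integrand is nonpositive on that subinterval. Similarly, for $u\in[c,b]$ I would write
\[
f(u)=f(b)-\int_u^b f'(t)\,\dd t=-\int_u^b f'(t)\,\dd t\leq0,
\]
since $f'\geq0$ on $[u,b]\subseteq[c,b]$. Together these give $f(u)\leq0$ for all $u\in[a,b]$, which is the assertion in the first case.

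For the bracketed case $f''\leq0$, I would simply remark that $g:=-f$ satisfies $g\in C^2([a,b])$, $g(a)=g(b)=0$, and $g''\geq0$, so the already-proved case gives $g(u)\leq0$, i.e.\ $f(u)\geq0$, on $[a,b]$. This completes the proof.

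There is essentially no obstacle here: the statement is a standard convexity fact (a convex function that vanishes at both endpoints of an interval is nonpositive between them). The only point requiring a word of care is the passage from "$f'$ is monotone" to "$f'$ changes sign at most once and is nonpositive then nonnegative," which is immediate from monotonicity plus the existence of a Rolle point; alternatively one could avoid Rolle entirely by invoking convexity directly, namely that the graph of a convex function lies below the chord joining $(a,f(a))$ and $(b,f(b))$, which here is the zero segment. Either route is routine, so I would choose whichever is shortest to state.
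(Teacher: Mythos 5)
Your proof is correct. The paper states this lemma without any proof at all, treating it as a standard convexity fact, so there is no argument to compare against; your Rolle-plus-monotonicity argument (find $c$ with $f'(c)=0$, use that $f'$ is increasing to get $f'\leq0$ on $[a,c]$ and $f'\geq0$ on $[c,b]$, then integrate from each endpoint) is a clean and complete way to fill that gap, and the reduction of the bracketed case to $-f$ is exactly right. As you note, one could equally well quote the chord property of convex functions directly; either route is fine.
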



\begin{lemma}\label{Lemma_IneqZn}
Let $0\leq{k}<1$, then for $0\leq{u}\leq{K}$,
\begin{equation}\label{IneqZn}
\frac{k^2{k'}^2}{1+{k'}^2}\cdot\frac{\sn(u)\,\cn(u)}{\dn(u)}
\leq\zn(u)\leq\frac{k^2}{1+{k'}^2}\cdot\frac{\sn(u)\,\cn(u)}{\dn(u)},
\end{equation}
where equality is attained for $u=0$ or $u=K$ or $k=0$.
\end{lemma}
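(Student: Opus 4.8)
The plan is to deduce both inequalities from the convexity criterion of Lemma~\ref{Lemma_Ineq}. For a constant $c$ put
\[
g(u):=\frac{\sn(u)\,\cn(u)}{\dn(u)},\qquad f_c(u):=\zn(u)-c\,g(u),\qquad u\in[0,K].
\]
Since $\sn(0)=0$, $\cn(K)=0$ and $\zn(0)=\zn(K)=0$, we have $f_c(0)=f_c(K)=0$ for every $c$, and $f_c\in C^2([0,K])$ because $\dn>0$ on $[0,K]$. Hence, by Lemma~\ref{Lemma_Ineq}, everything reduces to controlling the sign of $f_c''$ on $[0,K]$ for the two values $c=\frac{k^2}{1+k'^2}$ (right inequality) and $c=\frac{k^2k'^2}{1+k'^2}$ (left inequality).

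The main computation is $f_c''$. From the standard formula $\zn'(u)=\dn^2(u)-E/K$ one gets $\zn''(u)=-2k^2\sn(u)\,\cn(u)\,\dn(u)$. For $g$, writing $p:=\sn^2(u)$ and using $\cn^2=1-p$, $\dn^2=1-k^2p$, a short computation gives $g'(u)=(1-2p+k^2p^2)/(1-k^2p)$, and differentiating once more — here one uses the identity $2-k^2-2k^2p+k^4p^2=(1-k^2p)^2+k'^2=\dn^4(u)+k'^2$ — yields
\[
g''(u)=\frac{-2\,\sn(u)\,\cn(u)\,\bigl(\dn^4(u)+k'^2\bigr)}{\dn^3(u)}.
\]
Combining the two,
\[
f_c''(u)=\frac{2\,\sn(u)\,\cn(u)}{\dn^3(u)}\Bigl[(c-k^2)\,\dn^4(u)+c\,k'^2\Bigr],
\]
and on $[0,K]$ the factor $2\,\sn(u)\,\cn(u)/\dn^3(u)$ is $\geq0$, so the sign of $f_c''$ is the sign of the bracket.

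It then remains to check the two cases. For $c=\frac{k^2}{1+k'^2}$ the bracket equals $\frac{k^2k'^2}{1+k'^2}\bigl(1-\dn^4(u)\bigr)\geq0$ (as $\dn\leq1$), so $f_c''\geq0$ and Lemma~\ref{Lemma_Ineq} gives $f_c\leq0$ on $[0,K]$, i.e.\ the upper bound. For $c=\frac{k^2k'^2}{1+k'^2}$ the bracket equals $\frac{k^2}{1+k'^2}\bigl(k'^4-\dn^4(u)\bigr)=\frac{k^2}{1+k'^2}\bigl(k'^2-\dn^2(u)\bigr)\bigl(k'^2+\dn^2(u)\bigr)\leq0$, because $\dn^2(u)=1-k^2\sn^2(u)\geq k'^2$ on $[0,K]$; hence $f_c''\leq0$ and Lemma~\ref{Lemma_Ineq} gives $f_c\geq0$, i.e.\ the lower bound. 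Equality at $u=0$ and $u=K$ is immediate since $g$ and $\zn$ both vanish there, and equality for $k=0$ holds because then $\zn\equiv0$ and both constants vanish. The only nonroutine point is recognizing the factorization $\dn^4+k'^2$ inside $g''$; after that the sign analysis is direct, so I do not anticipate a genuine obstacle beyond the bookkeeping.
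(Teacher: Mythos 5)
Your proposal is correct and follows essentially the same route as the paper: both define $f(u)=\zn(u)-c\,\sn(u)\cn(u)/\dn(u)$, compute $f''(u)=\frac{2\sn(u)\cn(u)}{\dn^3(u)}\bigl(-k^2\dn^4(u)+c({k'}^2+\dn^4(u))\bigr)$, and apply the convexity criterion of Lemma~\ref{Lemma_Ineq} with the two choices of the constant, using $k'\leq\dn(u)\leq1$ for the sign of the bracket. Your verification of $g''$ via the substitution $p=\sn^2(u)$ and the explicit factorizations in the sign analysis are just slightly more detailed bookkeeping than the paper's version.
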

\begin{proof}
Let
\[
f(u):=\zn(u)-B\cdot\frac{\sn(u)\,\cn(u)}{\dn(u)},
\]
then $f(0)=f(K)=0$ and
\begin{align*}
f''(u)&=-2k^2\sn(u)\,\cn(u)\,\dn(u)+B\cdot\frac{2\,\sn(u)\,\cn(u)}{\dn^3(u)}\cdot\bigl({k'}^2+\dn^4(u)\bigr)\\
&=\frac{2\,\sn(u)\,\cn(u)}{\dn^3(u)}\Bigl(-k^2\dn^4(u)+B\bigl({k'}^2+\dn^4(u)\bigr)\Bigl)
\end{align*}
For $B:=k^2/(1+{k'}^2)$ and $B:=k^2{k'}^2/(1+{k'}^2)$, by the trivial inequality $k'\leq\dn(u)\leq1$, we get $f''(u)\geq0$ and $f''(u)\leq0$, respectively. By Lemma\,\ref{Lemma_Ineq}, both inequalities of \eqref{IneqZn} are proved.
\end{proof}


Using the basic properties of the function $\cn(2u)$, it is easy to derive successively the following properties for the function $z(u)$ defined in \eqref{z(u)}:


\begin{lemma}\label{Lemma_z(u)}
Let $0<k<1$, $0<\lambda<\frac{1}{2}$, then $z(u)$, defined by \eqref{z(u)}, has the following properties:
\begin{enumerate}
\item $z(-u)=z(u)$, $z(\ov{u})=\ov{z(u)}$
\item $z(u)$ is an elliptic function of order 2 with fundamental periods $2K$ and $K+\iK$ and simple poles at $\pm\laK$.
\item $z(0)=z(K+\iK)=-1$, $z(\iK)=z(K)=1$, $z(\laK)=z(K-\laK+\iK)=\infty$, $z(\frac{K}{2}-\frac{\iK}{2})=a_3$, $z(\frac{K}{2}+\frac{\iK}{2})=a_4$
\item The following mappings $z:A\to{B}$, $u\mapsto{z}(u)$, are bijective:\\
$[0,\iK]\to[-1,1]$, $[0,\laK)\to(-\infty,-1]$, $(\laK,K]\to[1,\infty)$, $[K,K+\iK]\to[-1,1]$, $[\iK,K-\laK+\iK)\to[1,\infty)$, $(K-\laK+\iK,K+\iK]\to(-\infty,-1]$
\item If $z^*>1$ then there exists a continuous Jordan arc $\CC$ in the $u$-plane with endpoints $\frac{K}{2}+\frac{\iK}{2}$ and $\frac{K}{2}-\frac{\iK}{2}$, crossing the real line at a point of the interval $(\laK,K)$, such that for all $u\in\CC$ there is $|\Omega(u)|=1$.
\end{enumerate}
\end{lemma}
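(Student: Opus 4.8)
The plan is to read off (i)--(iv) directly from the transformation rules of $\cn$, and to obtain (v) by a lifting argument for the degree-two elliptic function $z$.

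For (i)--(iv): since $\cn$ is even, real-valued on $\R$, and $\cn(2\laK)$ is real (because $0<2\laK<K$), \eqref{z(u)} immediately gives $z(-u)=z(u)$ and $z(\ov u)=\ov{z(u)}$; the periods $4K$ and $2K+2\iK$ of $\cn$ become the periods $2K$ and $K+\iK$ of $\cn(2u)$ and hence of $z$; and $z$ has a pole exactly where $\cn(2u)=\cn(2\laK)$, i.e.\ at $u\equiv\pm\laK$, each pole simple because $\tfrac{\dd}{\dd u}\cn(2u)=-2\sn(2u)\dn(2u)$ does not vanish there while $\cn(2u)\cn(2\laK)-1=-\sn^2(2\laK)\ne0$; counting the zeros of $\cn(2u)-\cn(2\laK)$, $z$ has order $2$, which settles (i) and (ii). For (iii)--(iv) I would write $z=\dfrac{c_0t-1}{t-c_0}$ with $t:=\cn(2u)$ and $c_0:=\cn(2\laK)\in(0,1)$; as a Möbius map in $t$ this is a homeomorphism of $\widehat\C$ sending $t=\infty\mapsto c_0$, $t=c_0\mapsto\infty$, $t=\pm1\mapsto\mp1$. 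The special values in (iii) then follow from $\cn(0)=1$, $\cn(2K)=\cn(2\iK)=-1$, $\cn\bigl(2(K-\laK+\iK)\bigr)=\cn(-2\laK)=c_0$, and the half-argument value $\cn(K-\iK)=\ii k'/k$ (obtained from $\cn(K-v)=k'\,\sn v/\dn v$ together with $\sn(\iK)/\dn(\iK)=\ii/k$, a consequence of Jacobi's imaginary transformation); substituting $t=\ii k'/k$ and simplifying with $\dn^2(2\laK)={k'}^2+k^2\cn^2(2\laK)$ reproduces exactly the right-hand side of \eqref{alphabeta}, so $z(\tfrac K2-\tfrac{\iK}{2})=a_3$ and then $z(\tfrac K2+\tfrac{\iK}{2})=a_4$ by (i). For each of the six segments in (iv), the monotonicity of $\cn$ on real intervals together with $\cn(2\ii s,k)=1/\cn(2s,k')$ and $\cn(w+2K)=\cn(w+2\iK)=-\cn(w)$ shows that $u\mapsto t$ is a homeomorphism of the segment onto a sub-arc of $\widehat\R$ not containing $c_0$, which the Möbius map carries onto the stated image.

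For (v): differentiating \eqref{z(u)} gives $z'(u)=\dfrac{(1-c_0^2)\bigl(-2\sn(2u)\dn(2u)\bigr)}{\bigl(\cn(2u)-c_0\bigr)^2}$, so on the torus $\C/\langle 2K,\,K+\iK\rangle$ the critical points of $z$ are the four points at which $\sn(2u)=0$ or $\dn(2u)=0$, namely $0,\ K,\ \tfrac K2-\tfrac{\iK}{2},\ \tfrac K2+\tfrac{\iK}{2}$, with critical values $-1,\ 1,\ a_3,\ a_4$. Since $z^*>1$, Theorem~\ref{Thm_TnTuple}\,(vi) gives $\arc\cap[-1,1]=\emptyset$, so $\arc\setminus\{a_3,a_4\}$ avoids the critical values $\pm1$; hence $z^{-1}(\arc\setminus\{a_3,a_4\})$ is a disjoint union of two arcs, each mapped homeomorphically onto $\arc\setminus\{a_3,a_4\}$, whose closures are --- as $a_3,a_4$ are branch points with single preimages $\tfrac K2\mp\tfrac{\iK}{2}$ --- two Jordan arcs, each joining $\tfrac K2-\tfrac{\iK}{2}$ to $\tfrac K2+\tfrac{\iK}{2}$; on both $|\Omega|\equiv1$, because $|\Omega(u)|=1\iff z(u)\in\Tn$ (from \eqref{Tn}, $\T_n(z(u))=\tfrac12(\Omega^n+\Omega^{-n})\in[-1,1]$ forces $|\Omega^n|=1$) and $z^{-1}(\arc)\subset z^{-1}(\Tn)$. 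For the crossing, note that $z^{-1}(\widehat\R)$ cuts the torus into two faces $Q:=\{0<\re u<K,\ -K'<\im u<0\}$ and $Q':=\{0<\re u<K,\ 0<\im u<K'\}$, mapped onto the upper and lower half-planes and containing $\tfrac K2-\tfrac{\iK}{2}$ and $\tfrac K2+\tfrac{\iK}{2}$, with common boundary the real segment $(0,K)$ on which, by (iv), $z$ carries $(\laK,K)$ onto $(1,\infty)$. Lifting $\arc$ from $\tfrac K2-\tfrac{\iK}{2}$, one of the two lifts runs in $\ov Q$ over the part of $\arc$ in the upper half-plane to the point of $z^{-1}(x_0)$ on $(0,K)$ --- where $x_0$ is the unique real point of $\arc$ --- and then in $\ov{Q'}$ to $\tfrac K2+\tfrac{\iK}{2}$; this Jordan arc $\CC$ crosses $\R$ in $(\laK,K)$ exactly when $x_0\in(1,\infty)$.

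What remains, and what I expect to be the main obstacle, is to rule out $x_0\in(-\infty,-1)$ (in which case the crossing would fall in $(0,\laK)$). Here I would argue by continuity in $k$ for fixed $\lambda$: by Theorem~\ref{Thm-z*}, $z^*$ is strictly increasing in $k$, so $z^*=1$ at a unique $k^*=k^*(\lambda)$; at $k=k^*$ Theorem~\ref{Thm_TnTuple}\,(v) forces $\arc$ to meet $[-1,1]$ at $z^*=1$, so the unique real point of $\arc$ is then $1$; for $k>k^*$ this real point $x_0$ depends continuously on $k$, never lies in $[-1,1]$ by Theorem~\ref{Thm_TnTuple}\,(vi), and tends to $1$ as $k\downarrow k^*$, so it stays in $(1,\infty)$. (Alternatively, the location of $\arc$ relative to $1$ can be quoted from the classical analyses \cite{Achieser1929,Lebedev} underlying Theorem~\ref{Thm_TnTuple}.) Making this continuity step precise is the delicate part of the argument.
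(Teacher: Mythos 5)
The paper states this lemma with no proof at all (only the remark that the properties ``are easy to derive successively'' from basic properties of $\cn(2u)$), so there is no argument of the author's to compare yours against; I can only judge your proposal on its own terms. Parts (i)--(iv) are correct and complete: the evenness, reality, periods, order and pole locations, the special values (including $\cn(K-\iK)=\ii k'/k$ and the verification that the Möbius map $t\mapsto(c_0t-1)/(t-c_0)$ sends it to $\alpha+\ii\beta$), and the six bijections all check out. For (v), the lifting argument is also sound: the four branch points $0,K,\tfrac K2\pm\tfrac{\iK}{2}$ with critical values $-1,1,a_3,a_4$ are correctly identified, the open arc $\arc\setminus\{a_3,a_4\}$ avoids them when $z^*>1$, so its preimage consists of two arcs whose closures join $\tfrac K2-\tfrac{\iK}{2}$ to $\tfrac K2+\tfrac{\iK}{2}$, and $|\Omega|\equiv1$ on them; and you correctly reduce the location of the crossing to showing that the unique real point $x_0$ of $\arc$ lies in $(1,\infty)$ rather than $(-\infty,-1)$.

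The genuine gap is exactly the step you flag: the continuity-in-$k$ argument for $x_0>1$ is only asserted (continuity of $k\mapsto x_0(k)$ and the limit $x_0\to1$ as $k\downarrow k^*$ are nontrivial to establish, since $x_0$ is defined only implicitly as the real point of a varying arc). This detour is unnecessary; a direct, purely polynomial argument closes it. Suppose $x_0<-1$. Put $\psi(x):=\T_n^2(x)-1=\h(x)\,\U_{n-2}^2(x)$, which is $\geq0$ on $(-\infty,-1]$ because $\h>0$ there. Since $\psi(x_0)=\psi(-1)=0$, Rolle gives $\xi\in(x_0,-1)$ with $\psi'(\xi)=0$. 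By \eqref{dTn}, $\psi'=2\T_n\T_n'=\pm2n\,\T_n(x)(x-z^*)\U_{n-2}(x)$; at $\xi$ we have $\T_n^2(\xi)\geq1$ so $\T_n(\xi)\neq0$, and $\xi-z^*<0$ because $\xi<-1<1<z^*$, so $\U_{n-2}(\xi)=0$ and hence $\T_n^2(\xi)=1$, i.e.\ $\xi\in\Tn\cap\R$. But $\Tn\cap\R=[-1,1]\cup\{x_0\}$ and $\xi$ is neither --- a contradiction. Hence $x_0\in(1,\infty)$, and by your part (iv) the crossing point of $\CC$ with the real axis lies in $(\laK,K)$, completing (v). With this replacement (or with a fully worked-out version of your continuity argument) the proof is complete.
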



\begin{lemma}\label{Lemma_Omega}
Let $0<k<1$, $0<\lambda<\frac{1}{2}$, $\lambda=\frac{m}{n}$, $m,n\in\N$, $0<m<\frac{n}{2}$, and let $\Omega(u)$ be defined by \eqref{Omega}. Then
\[
\Omega(0)=-1,\quad\Omega(\iK)=\exp(\tfrac{2\ii{m}\pi}{n}),\quad
\Omega(\tfrac{K}{2}\pm\tfrac{\iK}{2})=\exp(\pm\tfrac{\ii{m}\pi}{n}).
\]
\end{lemma}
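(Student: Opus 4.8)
\textbf{Proof proposal for Lemma~\ref{Lemma_Omega}.}
The plan is to evaluate $\Omega(u)$ at the three points $u=0$, $u=\iK$, and $u=\tfrac{K}{2}\pm\tfrac{\iK}{2}$ directly from its definition \eqref{Omega}, using only the standard translation formulas and special values of Jacobi's theta functions $H$ and $\Theta_1$. First I would record the parity and quasi-periodicity properties of $H$ and $\Theta_1$: $H$ is odd, $\Theta_1$ is even, $H(u+2K)=-H(u)$, $\Theta_1(u+2K)=\Theta_1(u)$, together with the imaginary-period shifts $H(u+\iK)=\ii\,q^{-1/4}e^{-\ii\pi u/(2K)}\Theta_1(u)$ and $\Theta_1(u+\iK)=\ii\,q^{-1/4}e^{-\ii\pi u/(2K)}H(u)$ (in Lawden/Whittaker--Watson normalization, where $q=e^{-\pi K'/K}$). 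These are exactly the tools needed to move the argument $\pm\laK=\pm\tfrac{m}{n}K$ around and to convert theta quotients into exponentials.

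For $u=0$: since $H$ is odd and $\Theta_1$ is even, $H(-\laK)=-H(\laK)$ and $\Theta_1(-\laK)=\Theta_1(\laK)$, so the numerator of $\Omega(0)$ is $-H(\laK)\Theta_1(\laK)$ and the denominator is $H(\laK)\Theta_1(\laK)$; hence $\Omega(0)=-1$. For $u=\iK$: I would apply the imaginary-shift formulas to each of the four theta factors $H(\iK\mp\laK)$ and $\Theta_1(\iK\mp\laK)$. Each shift produces a factor $\ii\,q^{-1/4}e^{\mp\ii\pi(\mp\laK)/(2K)}$ times the ``other'' theta function, so in the quotient $\Omega(\iK)$ all the $\ii\,q^{-1/4}$ prefactors and all the swapped theta values cancel, leaving a pure exponential. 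Bookkeeping the exponents gives $e^{\ii\pi\laK/K}\cdot e^{\ii\pi\laK/K}=e^{2\ii\pi\lambda}=e^{2\ii m\pi/n}$, using $\laK=\tfrac{m}{n}K$. For $u=\tfrac{K}{2}\pm\tfrac{\iK}{2}$: here $u-\laK=\tfrac{K}{2}-\laK\pm\tfrac{\iK}{2}$ and $u+\laK=\tfrac{K}{2}+\laK\pm\tfrac{\iK}{2}$, so I would again peel off the half-imaginary-period shift (using the half-period versions of the formulas above, which carry a factor $q^{-1/8}e^{\mp\ii\pi u/(4K)}$ and interchange $H\leftrightarrow\Theta_1$, $H_1\leftrightarrow\Theta$ in the appropriate pattern), reducing $\Omega$ to a quotient of theta values at the real arguments $\tfrac{K}{2}\mp\laK$ and $\tfrac{K}{2}\pm\laK$ times an exponential. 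The real-argument theta quotient should simplify to $1$ by the reflection identities $H(\tfrac{K}{2}+v)$ versus $H(\tfrac{K}{2}-v)$ (equivalently, writing everything through $\sn$, $\cn$, $\dn$ at $K/2$, whose values $\sn(K/2)=1/\sqrt{1+k'}$ etc.\ are symmetric), and the leftover exponential evaluates to $e^{\pm\ii\pi\laK/K}=e^{\pm\ii m\pi/n}$.

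The main obstacle is purely organizational rather than conceptual: keeping the normalization of the theta functions, the nome $q$, and the signs in the quasi-period formulas consistent across the four factors, so that the spurious prefactors genuinely cancel and one is left with a clean exponential. In particular the $u=\tfrac{K}{2}\pm\tfrac{\iK}{2}$ case requires care because the shift is by a \emph{half} imaginary period, so one must use the correct half-period transformation laws (or, equivalently, first reduce $z(u)$ and $\Omega(u)$ via the already-established values $z(\tfrac{K}{2}\mp\tfrac{\iK}{2})=a_3,a_4$ from Lemma~\ref{Lemma_z(u)}(iii) and match against the known endpoint behavior). An alternative, slightly slicker route that avoids the half-period bookkeeping is to note that $|\Omega(u)|=1$ on the relevant arcs and that $\T_n(z(u))=\tfrac12(\Omega^n+\Omega^{-n})$ takes the value $\pm1$ exactly at the endpoints $z=\pm1,a_3,a_4$; combined with monotonicity of the phase $\varphi(u)$ established in the proof of Theorem~\ref{Thm-EP-Arcs}, this pins down $\Omega^n$ and hence $\Omega$ up to roots of unity, and the continuity/normalization at $u=0$ fixes the branch. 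I would present the direct theta-function computation as the primary proof and mention this consistency check.
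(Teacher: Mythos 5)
Your evaluations of $\Omega(0)$ and $\Omega(\iK)$ follow the paper's route (parity of $H$ and $\Theta_1$, then the $\iK$-shift formulas), and the stated answers are correct; note only that the shift formula you quote for $\Theta_1$ is wrong as written: the correct identity is $\Theta_1(u+\iK)=q^{-1/4}e^{-\ii\pi u/(2K)}H_1(u)$ (no factor $\ii$, and $H_1$, not $H$ --- what you wrote is the formula for $\Theta(u+\iK)$). Taken literally, your version produces $\Omega(\iK)=-e^{2\ii m\pi/n}$, so the sign bookkeeping you flag as the ``main obstacle'' is not merely organizational here.

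The genuine gap is in the case $u=\tfrac{K}{2}\pm\tfrac{\iK}{2}$. The ``half-period versions'' of the shift formulas that you invoke --- carrying a factor $q^{-1/8}e^{\mp\ii\pi u/(4K)}$ and interchanging $H\leftrightarrow\Theta_1$ --- do not exist: a shift by $\iK/2$ is a \emph{quarter} of the imaginary period $2\iK$, and under $z\mapsto z+\pi\tau/4$ the four Jacobi theta functions are not permuted among themselves (one obtains series with characteristics $\pm\tfrac14,\pm\tfrac34$, not elementary multiples of $H,H_1,\Theta,\Theta_1$). So the promised reduction to theta values at the real arguments $\tfrac{K}{2}\mp\laK$ cannot be carried out this way. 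Your fallback arguments do not close the gap either: invoking the phase analysis of Theorem~\ref{Thm-EP-Arcs} is circular, since that proof cites precisely the values $\Omega(\tfrac{K}{2}\pm\tfrac{\iK}{2})=e^{\pm\ii m\pi/n}$ from this lemma; and knowing only that $\T_n=\pm1$ at $a_3,a_4$ pins down $\Omega^n\in\{1,-1\}$, i.e.\ $\Omega$ up to a $2n$-th root of unity, which is far from the claimed value. The paper's trick is different and avoids quarter-periods entirely: write $\tfrac{K}{2}+\tfrac{\iK}{2}\mp\laK=\bigl(-\tfrac{K}{2}-\tfrac{\iK}{2}\mp\laK\bigr)+(K+\iK)$ and apply the \emph{full} shift $H(v+K+\iK)=q^{-1/4}e^{-\ii\pi v/(2K)}\Theta_1(v)$; by the evenness of $\Theta_1$ the four resulting theta values cancel in pairs against the $\Theta_1$ factors already present in $\Omega$, leaving exactly the exponential $e^{\pm\ii\pi\laK/K}=e^{\pm\ii m\pi/n}$. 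You need this (or an equivalent full-period manipulation) to complete the third evaluation.
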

\begin{proof}
First let us note that $H(u)$ is an odd function of $u$ and $H_1(u)$, $\Theta_1(u)$, and $\Theta(u)$ are even functions of $u$. Therefore, $\Omega(0)=-1$ follows immediately. By the formulae
\begin{align*}
H(u+\iK)&=\ii\,q^{-1/4}\exp(-\tfrac{\ii\pi{u}}{2K})\,\Theta(u),\\
\Theta_1(u+\iK)&=q^{-1/4}\exp(-\tfrac{\ii\pi{u}}{2K})\,H_1(u),
\end{align*}
we get
\[
\Omega(\iK)=\exp(\tfrac{2\ii\pi\laK}{K})=\exp(\tfrac{2\ii{m}\pi}{n}).
\]
By the formula
\[
H(u+K+\iK)=H_1(u+\iK)=q^{-1/4}\,\exp(-\tfrac{\ii\pi{u}}{2K})\,\Theta_1(u)
\]
we get
\begin{align*}
\Omega(\tfrac{K}{2}+\tfrac{\iK}{2})&=\frac{H(-\tfrac{K}{2}-\tfrac{\iK}{2}-\laK+K+\iK)}
{H(-\tfrac{K}{2}-\tfrac{\iK}{2}+\laK+K+\iK)}\cdot\frac{\Theta_1(\tfrac{K}{2}+\tfrac{\iK}{2}-\laK)}
{\Theta_1(\tfrac{K}{2}+\tfrac{\iK}{2}+\laK)}\\
&=\frac{q^{-1/4}\exp\bigl(-\frac{\ii\pi}{2K}(-\frac{K}{2}-\frac{\iK}{2}-\laK)\bigr)}
{q^{-1/4}\exp\bigl(-\frac{\ii\pi}{2K}(-\frac{K}{2}-\frac{\iK}{2}+\laK)\bigr)}\\
&\qquad\cdot\frac{\Theta_1(-\tfrac{K}{2}-\tfrac{\iK}{2}-\laK)}{\Theta_1(-\tfrac{K}{2}-\tfrac{\iK}{2}+\laK)}\cdot
\frac{\Theta_1(\tfrac{K}{2}+\tfrac{\iK}{2}-\laK)}{\Theta_1(\tfrac{K}{2}+\tfrac{\iK}{2}+\laK)}\\
&=\exp(\tfrac{\ii\pi\laK}{K})=\exp(\tfrac{\ii{m}\pi}{n}).
\end{align*}
\end{proof}


\begin{lemma}\label{Lemma-Derivatives}
For $0<k<1$ and $0<\mu<\frac{1}{2}$, we have
\[
\frac{\dd}{\dd\mu}\Bigl\{\frac{\cn(2\mu{K})}{\dn^2(2\mu{K})}\Bigr\}
=\frac{2K\sn(2\mu{K})}{\dn^3(2\mu{K})}\Bigl[k^2\cn(2\mu{K})+{k'}^2\Bigr]
\leq\frac{2K}{{k'}^3}
\]
and
\[
\frac{\dd}{\dd\mu}\Bigl\{\frac{\sn^2(2\mu{K})}{\dn^2(2\mu{K})}\Bigr\}
=\frac{4K\sn(2\mu{K})\cn(2\mu{K})}{\dn^3(2\mu{K})}\leq\frac{4K}{{k'}^3}
\]
\end{lemma}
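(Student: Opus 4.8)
The statement to prove is Lemma~\ref{Lemma-Derivatives}, which asserts two derivative formulas together with explicit bounds. My plan is to treat each of the two derivatives separately, since both are straightforward consequences of the standard differentiation rules for Jacobi's elliptic functions with respect to the argument, combined with the trivial chain-rule factor $\frac{\dd}{\dd\mu}(2\mu K)=2K$ (here $K=K(k)$ is held fixed, since $k$ is fixed).

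For the first derivative, I would start from the known formulas $\sn'(u)=\cn(u)\dn(u)$, $\cn'(u)=-\sn(u)\dn(u)$, $\dn'(u)=-k^2\sn(u)\cn(u)$, and apply the quotient rule to $\cn(2\mu K)/\dn^2(2\mu K)$. Writing $v:=2\mu K$, the numerator of the derivative (before dividing by $\dn^4(v)$) is
\[
-\sn(v)\dn(v)\cdot\dn^2(v)-\cn(v)\cdot 2\dn(v)\bigl(-k^2\sn(v)\cn(v)\bigr),
\]
so that after multiplying by the chain factor $2K$ and simplifying by $\dn(v)$ one gets $\frac{2K\sn(v)}{\dn^3(v)}\bigl(-\dn^2(v)+2k^2\cn^2(v)\bigr)$. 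To match the claimed bracket $k^2\cn(v)+{k'}^2$, I would use the identities $\dn^2(v)=1-k^2\sn^2(v)=k'^2+k^2\cn^2(v)$ and $k^2\cn^2(v)+... $; in fact $-\dn^2(v)+2k^2\cn^2(v) = -(k'^2+k^2\cn^2(v))+2k^2\cn^2(v)=k^2\cn^2(v)-k'^2$, which does \emph{not} immediately equal $k^2\cn(v)+k'^2$. So the more careful route is: $-\dn^2(v)+2k^2\cn^2(v)$; note $k^2\cn^2(v)-k'^2\sn^2(v)... $ — rather, the cleanest derivation is to differentiate $\cn\cdot\dn^{-2}$ as $\cn'\dn^{-2}-2\cn\dn^{-3}\dn'$, giving $\dn^{-3}(\cn'\dn-2\cn\dn')=\dn^{-3}(-\sn\dn^2+2k^2\sn\cn^2)=\sn\dn^{-3}(2k^2\cn^2-\dn^2)$, and then rewriting $2k^2\cn^2-\dn^2$. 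Since $\dn^2=1-k^2\sn^2$ and $\cn^2=1-\sn^2$, we have $2k^2\cn^2-\dn^2=2k^2-2k^2\sn^2-1+k^2\sn^2 = (2k^2-1)-k^2\sn^2 = -k'^2+k^2-k^2\sn^2=-k'^2+k^2\cn^2$. Hmm, this gives $k^2\cn^2(v)-k'^2$ rather than $k^2\cn(v)+k'^2$. I would recheck the sign conventions in the paper's cited reference \cite{Lawden} for $\zn$ and the Jacobi function derivatives, or suspect a sign/typo; more likely the correct simplification uses $\dn^2(v)=k'^2+k^2\cn^2(v)$ directly in $2k^2\cn^2-\dn^2 = 2k^2\cn^2 - k'^2 - k^2\cn^2 = k^2\cn^2(v)-k'^2$, so the bracket should read $k^2\cn^2(2\mu K)-{k'}^2$ — in any case the bound follows since $|k^2\cn^2-k'^2|$, $|k^2\cn+k'^2|\le k^2+k'^2=1$, $|\sn|\le1$, $\dn\ge k'$, giving $\le 2K/k'^3$.

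For the second derivative, the computation is cleaner: differentiating $\sn^2(v)/\dn^2(v)$ with $v=2\mu K$ gives, by the quotient rule, $\frac{2\sn\cn\dn\cdot\dn^2 - \sn^2\cdot 2\dn\cdot(-k^2\sn\cn)}{\dn^4}\cdot 2K = \frac{2K\cdot 2\sn\cn}{\dn^3}(\dn^2+k^2\sn^2)$. Since $\dn^2(v)+k^2\sn^2(v) = (1-k^2\sn^2(v))+k^2\sn^2(v)=1$, this simplifies exactly to $\frac{4K\sn(2\mu K)\cn(2\mu K)}{\dn^3(2\mu K)}$, and the bound $\le 4K/k'^3$ follows from $|\sn|\le1$, $|\cn|\le1$, $\dn\ge k'$. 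This part has no obstacle.

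The main obstacle, then, is purely bookkeeping: getting the algebraic simplification of the first derivative's bracket into the exact form stated (and confirming or correcting the apparent $\cn^2$ versus $\cn$ and sign discrepancy), by carefully tracking the Jacobi identities $\cn^2+\sn^2=1$, $\dn^2+k^2\sn^2=1$, $\dn^2=k'^2+k^2\cn^2$. Once the derivative is in the form $\frac{2K\sn(2\mu K)}{\dn^3(2\mu K)}\bigl[\,\cdot\,\bigr]$ with the bracket bounded in absolute value by $1$ (which holds whether it is $k^2\cn^2-k'^2$, $k^2\cn+k'^2$, or $k^2\cn^2+k'^2$, all lying in $[-1,1]$ for $0<k<1$ and $|\cn|\le1$), the estimate $\le 2K/k'^3$ is immediate from $0\le\sn(2\mu K)\le 1$ (valid since $0<2\mu K<K$, so $\sn,\cn,\dn$ are all positive there) and $\dn(2\mu K)\ge k'$. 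I would present the two computations in sequence, invoking only the standard derivative formulas and Jacobi identities, and close with the two elementary bounds.
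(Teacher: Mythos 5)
Your proof is correct and is essentially the paper's own proof: the paper simply cites the standard derivative formulas for $\sn$, $\cn$, $\dn$ (Byrd--Friedman (731.01)--(731.03)) and the trivial estimates $0<\sn(u),\cn(u)<1$, $k'<\dn(u)<1$ on $(0,K)$, exactly as you do. Your careful bookkeeping moreover uncovers a genuine misprint in the statement: the correct bracket in the first identity is $k^2\cn^2(2\mu K)-{k'}^2$, not $k^2\cn(2\mu K)+{k'}^2$ (the printed version cannot be right, since it would make $\cn(2\mu K)/\dn^2(2\mu K)$ increasing on $(0,\tfrac12)$, contradicting its values $1$ at $\mu=0$ and $0$ at $\mu=\tfrac12$). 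As you note, the estimate $\bigl|\tfrac{\dd}{\dd\mu}\{\cn(2\mu K)/\dn^2(2\mu K)\}\bigr|\leq 2K/{k'}^3$ — which is what the density theorem actually uses via the mean value theorem — survives, since $|k^2\cn^2-{k'}^2|\leq\max(k^2,{k'}^2)\leq1$.
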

\begin{proof}
The derivatives are obtained by the formulae (731.01)--(731.03) of \cite{ByrdFriedman}, the inequalities are a direct consequence of the trivial estimates $0<\sn(u)<1$, $0<\cn(u)<1$, and $k'<\dn(u)<1$, for $0<u<K$.
\end{proof}


\bibliographystyle{amsplain}

\bibliography{IntervalArc}

\end{document}